\documentclass[a4paper,oneside,11pt] {amsart}
\reversemarginpar  % These 2 commands work very fine

\usepackage{eucal}
\usepackage{amsthm}
\usepackage{amsfonts}
\usepackage{amsmath}
\usepackage{amssymb}
\usepackage{mathrsfs}
\usepackage{epsfig}

 \usepackage{float}

 \voffset = -32pt \hoffset = -40pt
\textwidth = 6.275in
\parskip 1.25pt
\textheight = 9in

\DeclareSymbolFont{SY}{U}{psy}{m}{n}
\DeclareMathSymbol{\emptyset}{\mathord}{SY}{'306}

%\frenchspacing \oddsidemargin 0.3cm \evensidemargin 0.9cm \textwidth
%6.4in

%\lim_{x\rightarrow \infty}
\newcommand{\ncom}{\newcommand}
\ncom{\ul}{\underline}
\ncom{\ol}{\overline}
\ncom{\bq}{\begin{equation}}
\ncom{\eq}{\end{equation}}
\ncom{\beqn}{\begin{eqnarray*}}
\ncom{\eeqn}{\end{eqnarray*}}
\ncom{\beq}{\begin{eqnarray}}
\ncom{\eeq}{\end{eqnarray}}
\ncom{\nno}{\nonumber}
\ncom{\rar}{\rightarrow}
\ncom{\Rar}{\Rightarrow}
\ncom{\noin}{\noindent}
\ncom{\bc}{\begin{centre}}
\ncom{\ec}{\end{centre}}
\ncom{\sz}{\scriptsize}
\ncom{\rf}{\ref}
\ncom{\sgm}{\sigma}
\ncom{\Sgm}{\Sigma}
\ncom{\dt}{\delta}
\ncom{\Dt}{Delta}

\ncom{\lmd}{\lambda}
\ncom{\Lmd}{\Lambda}
%\ncom{\th}{\theta}
%\ncom{\Th}{\Theta}
\ncom{\eps}{\epsilon}
\ncom{\pcc}{\stackrel{P}{>}}
\ncom{\dist}{{\rm\,dist}}
\ncom{\sspan}{{\rm\,span}}
\ncom{\re}{{\rm Re\,}}
\ncom{\im}{{\rm Im\,}}
\ncom{\sgn}{{\rm sgn\,}}
\ncom{\ba}{\begin{array}}
\ncom{\ea}{\end{array}}
\ncom{\eop}{\hfill{{\rule{2.5mm}{2.5mm}}}}
\ncom{\eoe}{\hfill{{\rule{1.5mm}{1.5mm}}}}
\ncom{\eof}{\hfill{{\rule{1.5mm}{1.5mm}}}}
\ncom{\hone}{\mbox{\hspace{1em}}}
\ncom{\htwo}{\mbox{\hspace{2em}}}
\ncom{\hthree}{\mbox{\hspace{3em}}}
\ncom{\hfour}{\mbox{\hspace{4em}}}
\ncom{\hsev}{\mbox{\hspace{7em}}}
\ncom{\vone}{\vskip 2ex}
\ncom{\cH}{{\mathcal H}}
\ncom{\vtwo}{\vskip 4ex}
\ncom{\vonee}{\vskip 1.5ex}
\ncom{\vthree}{\vskip 6ex}
\ncom{\vfour}{\vspace*{8ex}}
\ncom{\norm}{\|\;\;\|}
\ncom{\integ}[4]{\int_{#1}^{#2}\,{#3}\,d{#4}}
\ncom{\inp}[2]{\langle{#1},\,{#2} \rangle}
\ncom{\Inp}[2]{\big\langle{#1},\,{#2} \big\rangle}
\ncom{\vspan}[1]{{{\rm\,span}\#1 \}}}
\ncom{\dm}[1]{\displaystyle {#1}}
\ncom{\Hom}{\operatorname{Hom}}
\ncom{\Hol}{\operatorname{Hol}}

\ncom{\defin} {\overset {\text {\rm def} }{=}}

\newtheorem{theorem}{\bf Theorem}[section]
\newtheorem{example}[theorem]{\bf Example}%[section]
%[section]
\newtheorem{proposition}[theorem]{\bf Proposition}%[section]
\newtheorem{corollary}[theorem]{\bf Corollary}%[section]
\newtheorem{lemma}[theorem]{\bf Lemma}%[section]

\newtheorem{question}[theorem]{\bf Question}
\newtheorem{conjecture}[theorem]{\bf Conjecture}

\newtheoremstyle
    {remarkstyle}
    {}
    {11pt}
    {}
    {}
    {\bfseries}
    {:}
    {     }
    {\thmname{#1} \thmnumber{#2} }

\theoremstyle{remarkstyle}

\newtheorem{remark}[theorem]{\bf Remark}%[section]
\newtheorem{definition}[theorem]{\bf Definition}%[section]
% \newtheorem{example}[theorem]{\bf Example}

%\newtheorem{corollary}[theorem]{\bf Corollary}%[section]

%\newtheorem{ex}[theorem]{\bf Example}
%\renewcommand{\theex}{}
%\renewcommand{\therem}{}

%
% Dmitry NEWCOMMANDS
%

\ncom{\bbeta}{\beta}
\renewcommand{\epsilon}{\varepsilon}
\renewcommand{\kappa}{\varkappa}

\begin{document}

 \baselineskip=16pt

\title [On sum of two subnormal kernels] {On sum of two subnormal kernels}

\author[S. Ghara]{Soumitra Ghara}
\address[S. Ghara]{Department of Mathematics, Indian Institute of Science,
Bangalore 560012, India} \email{ghara90@gmail.com}

\author[S. Kumar]{Surjit Kumar}
\address[S. Kumar]{Department of Mathematics, Indian Institute of Science,
Bangalore 560012, India} \email{surjit.iitk@gmail.com}

 \thanks{Work of the first author was supported by CSIR SPM Fellowship and
 work of the second author was supported by Inspire Faculty Fellowship.}

\date{}

\begin{abstract}
We show, by means of a class of examples, that if $K_1$ and $K_2$ are
two positive definite kernels on the unit disc such that the multiplication
by the coordinate function on the corresponding reproducing kernel Hilbert space
is subnormal, then the multiplication operator on the Hilbert space determined by
their sum $K_1+K_2$ need not be subnormal. This settles a recent conjecture of
Gregory T. Adams, Nathan S. Feldman and Paul J. McGuire in the negative.
We also discuss some cases for which the answer is affirmative.
\end{abstract}

 \renewcommand{\thefootnote}{}
\footnote{2010 \emph{Mathematics Subject Classification}: Primary 46E20, 46E22; Secondary 47B20, 47B37}
\footnote{\emph{Key words and phrases}: Completely alternating, Completely hyperexpansive, Completely monotone,
 Positive definite kernel, Spherically balanced spaces, Subnormal operators}

\maketitle

%\setcounter{tocdepth}{2} \tableofcontents

%\newpage

%%%%%%%%%%%%%%%%%%%%%%%%%%%%%%%%%%%%%%%%%%%%%%%%%%%%%%%%%%%%%%%%%%%%%%%%%%%%%%%%%%%%%%%%%%%
%%%%%%%%%%%%%%%%%%%%%%%%%%%%%%%%%%%%%%%%%%%%%%%%%%%%%%%%%%%%%%%%%%%%%%%%%%%%%%%%%%%%%%%%%%%

\section{Introduction}
 Let ${\mathcal H}$ be a complex separable
Hilbert space.
%with the inner product $\langle x,y \rangle_{\mathcal
%H} \ (x,y \in {\mathcal H})$ and the corresponding norm
%$||x||_{\mathcal H} \ (x \in {\mathcal H})$.
Let ${B}({\mathcal H})$
denote the Banach algebra of bounded linear operators on $\mathcal H.$
Recall that an operator $T$ in ${B}({\mathcal H})$ is said to be {\it subnormal}
if there exists a Hilbert space $\mathcal K\supset\mathcal{H}$ and a normal operator $N$ in ${B}({\mathcal K})$ such that
$N(\mathcal H)\subset \mathcal H$ and $N|_{\mathcal{H}}=T.$ For the basic theory of subnormal operators, we refer to \cite{Co}.

{\it Completely hyperexpansive} operators were introduced in \cite{At-2}.
An operator $T\in B(\mathcal H)$ is said to be completely hyperexpansive if
\beqn \sum_{j=0}^n(-1)^j \binom{n}{j}T^{*j}T^j\leq 0 \qquad (n\geq 1).\eeqn

The theory of subnormal and completely hyperexpansive operators are closely
related with the theory {\it completely monotone} and {\it completely alternating} sequences
(cf.  \cite{At-1}, \cite{At-2} ).

Let $\mathbb Z_+$ denote the set of non-negative integers.
A sequence $\{a_k\}_{k \in \mathbb Z_+}$ of positive real numbers is said to be a completely monotone if

\beqn \sum_{j=0}^n (-1)^j{n\choose j}a_{m+j}\geq 0 \qquad (m, n\geq 0).\eeqn
It is well-known that a sequence $\{a_k\}_{k \in \mathbb Z_+}$ is completely monotone
if and only if the sequence $\{a_k\}_{k \in \mathbb Z_+}$ is a Hausdorff moment sequence, that is, there
exists a positive measure $\nu$ supported in $[0,1]$ such that
 $a_k=\int_{[0,1]}x^k d\nu(x)$ for all $k \in \mathbb Z_+$ (cf. \cite{B-C-R}).

Similarly, a sequence $\{a_k\}_{k \in \mathbb Z_+}$ of positive real numbers is said to be  completely alternating if
\beqn \sum_{j=0}^n (-1)^j{n\choose j}a_{m+j}\leq 0 \qquad (m\geq 0,n\geq 1).\eeqn
Note that $\{a_k\}_{k \in \mathbb Z_+}$ is completely alternating if and only if
the sequence $\{\Delta a_k\}_{k \in \mathbb Z_+}$ is completely monotone, where $\Delta a_k:=a_{k+1}-a_k.$

Let $\mathcal H(K)$ be a reproducing kernel Hilbert space consisting of holomorphic functions on the open
unit disc $\mathbb D:=\{z\in \mathbb C: |z|<1\}$ with reproducing
kernel $K(z,w)=\sum_{k \in \mathbb Z_+} a_k(z\bar{w})^k.$ Thus $K$ is holomorphic
in the first variable and anti-holomorphic in the second. We make the assumption
that a kernel function is always holomorphic in the first variable
and anti-holomorphic in the second throughout this note. Consider the
operator $M_z$ of multiplication by the coordinate function $z$ on
$\mathcal H(K).$ 
As is well-known,
such a multiplication operator is unitarily equivalent to a weighted shift operator $W$ on the
sequence space
$$\ell^2(\mathbf a):= \{\mathbf x:=(x_0,x_1, \ldots ): \|\mathbf x\|^2 = \sum_{k=0}^\infty \frac{|x_k|^2}{a_k} < \infty \}$$
with $W e_n = \sqrt{\tfrac{a_n}{a_{n+1}}} e_{n+1},$ where $e_n$ is the standard unit vector.
Assume that $M_z$ is bounded. Then $M_z$ is a
contractive subnormal if and only if the sequence
$\{\frac{1}{a_k}\}_{k \in \mathbb Z_+}$ is a Hausdorff moment sequence.
On the other hand, $M_z$ on $\mathcal{H}(K)$ is completely hyperexpansive
if and only if the sequence $\{\frac{1}{a_k}\}_{k \in \mathbb Z_+}$ is completely alternating
(cf. \cite[Proposition 3]{At-2}). 

For any two positive definite kernels $K_1$ and $K_2,$ their sum $K_1+K_2$
is again a positive definite kernel and therefore determines a Hilbert
space $\mathcal H(K_1+K_2)$ of functions. It was shown in \cite{Aro} that
$$\mathcal H(K_1+K_2) = \{f=f_1+f_2:
f_1 \in \mathcal H(K_1), f_2 \in \mathcal H(K_2)\},$$
is a Hilbert space with the norm given by
\beqn \|f\|^2_{\mathcal H(K_1+K_2)} := \inf \left\{\|f_1\|^2_{\mathcal H(K_1)}+\|f_2\|^2_{\mathcal H(K_2)}
: f=f_1+f_2,~f_1 \in \mathcal H(K_1),~f_2 \in \mathcal H(K_2)\right\}.\eeqn
Sum of two kernel functions is also discussed by Salinas in \cite{Sal}. He proved that if
$K_1$ and $K_2$ are generalized Bergman kernels (for definition, refer to \cite{Cu-Sa-1} ), then so is $K_1+K_2.$
Although not explicitly stated in \cite{Aro}, it is not hard to verify that the multiplication operator
$M_z$ on $\mathcal H(K_1+K_2)$ is unitarily equivalent to
the operator $P_{\mathcal M^{\perp}}(M_{z,1}\oplus M_{z,2})|_{\mathcal M^{\perp}},$ where $M_{z,i}$ is the 
operator of multiplication by the coordinate function $z$ on $\mathcal H(K_i)$ and
$$\mathcal M = \{(g, -g) \in \mathcal H(K_1)
\oplus \mathcal H(K_2) : g \in \mathcal H(K_1)\cap \mathcal H(K_2)\} \subseteq \mathcal H(K_1) \oplus \mathcal H(K_2).$$
Evidently, if $M_{z,1}$ and $M_{z,2}$ are subnormal, then so is $M_{z,1}\oplus M_{z,2}.$ Here, we discuss the subnormality
of the compression $P_{\mathcal M^{\perp}}(M_{z,1}\oplus M_{z,2})|_{\mathcal M^{\perp}}$ for a class of kernels.
In particular, we show that the subnormality of $M_{z,1}$ and $M_{z,2}$ need not imply
$P_{\mathcal M^{\perp}}(M_{z,1}\oplus M_{z,2})|_{\mathcal M^{\perp}}$
is subnormal.

A similar question on subnormality involving the point-wise product of two positive definite kernels was 
raised in \cite{Sal}.
Recall that the product $K_1 K_2$ of two positive definite kernels defined on, say the unit disc $\mathbb D,$   is also a positive
definite kernel on $\mathbb D.$
Indeed, if $\mathcal H(K_1) \otimes \mathcal H(K_2) \subseteq \mbox{\rm Hol}(\mathbb D\times \mathbb D)$ be the usual tensor product of
the two Hilbert spaces $\mathcal H(K_1)$ and $\mathcal H(K_2),$ and $\mathcal{N}$ is the subspace
$\{h\in \mathcal H(K_1)\otimes \mathcal H(K_2):h(z,z)=0, z \in \mathbb D\},$ then the operator $M_{z,1}\otimes I$ acting
on the Hilbert space $\mathcal H(K_1) \otimes \mathcal H(K_2)$  compressed to $\mathcal N^\perp$ is unitarily
equivalent to the multiplication operator $M_z$ on the Hilbert space $\mathcal H(K_1K_2).$
If $M_{z,1}$ is subnormal on $\mathcal H(K_1),$ then so is the operator $M_{z,1}\otimes I.$

The answer to the question of subnormality, both in the case of the sum as
well as the product, is affirmative in several examples. 
For over thirty years, the question of whether the compression to $\mathcal N^\perp$ is subnormal
had remained open. Recently, a counter-example has been found,
see \cite[Theorem 1.5]{A-Ch-1}. The conjecture below is similar except that it involves the sum of two kernels.

\begin{conjecture}\mbox{\rm \bf (}\cite[pp. 22]{A-F-M} \!\!\mbox{\rm \bf ).}\label{conj}
 Let $K_1(z,w)=\sum_{k \in \mathbb Z_+} a_k (z\bar{w})^k$ and $K_2(z,w)=\sum_{k \in \mathbb Z_+} b_k(z\bar{w})^k$
 be any two reproducing kernels satisfying:

\begin{itemize}
\item[(a)] $\lim \frac{a_k}{a_{k+1}}=\lim \frac{b_k}{b_{k+1}}=1$
\item[(b)]$\lim a_k=\lim b_k=\infty$
\item[(c)]$\frac{1}{a_k}=\int_{[0, 1]} t^k d\nu_1(t)$ and $\frac{1}{b_k}=\int_{[0, 1]} t^k d\nu_2(t)$ for all $k \in \mathbb Z_+.$
\end{itemize}
Then the multiplication operator $M_z$ on $\mathcal
H(K_1+K_2)$ is a subnormal operator.
\end{conjecture}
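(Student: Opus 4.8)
The plan is first to reduce the conjecture to a statement purely about moment sequences. Since $K_1+K_2=\sum_{k\in\mathbb Z_+}(a_k+b_k)(z\bar w)^k$ is again a kernel of the same monomial form, $\mathcal H(K_1+K_2)$ consists of the holomorphic functions $f=\sum_k\hat f_kz^k$ on $\mathbb D$ with $\sum_k|\hat f_k|^2/(a_k+b_k)<\infty$, and $M_z$ on it is unitarily equivalent to the weighted shift with weights $w_n=\sqrt{(a_n+b_n)/(a_{n+1}+b_{n+1})}$. Writing $\alpha_k:=1/a_k=\int_{[0,1]}t^k\,d\nu_1(t)$ and $\beta_k:=1/b_k=\int_{[0,1]}t^k\,d\nu_2(t)$, both $\{\alpha_k\}$ and $\{\beta_k\}$ are nonincreasing, so $\{a_k\}$ and $\{b_k\}$, hence $\{a_k+b_k\}$, are nondecreasing; thus $w_n\le 1$, $M_z$ is a contraction, and by the criterion recalled above it is subnormal precisely when $\{1/(a_k+b_k)\}_{k\in\mathbb Z_+}$ is a Hausdorff moment sequence. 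So the conjecture is equivalent to the analytic assertion that
\[
\frac{1}{a_k+b_k}=\frac{\alpha_k\beta_k}{\alpha_k+\beta_k}
\]
is a Hausdorff moment sequence whenever $\{\alpha_k\}$ and $\{\beta_k\}$ are, under the additional hypotheses (a) and (b), which translate respectively into $\sup\operatorname{supp}\nu_i=1$ and $\nu_i(\{1\})=0$ for $i=1,2$.

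I would then try to prove this affirmatively. Sums and products of Hausdorff moment sequences are again Hausdorff moment sequences (for the product, push $\nu_1\times\nu_2$ forward under $(s,t)\mapsto st$), so $\{\alpha_k+\beta_k\}$ and $\{\alpha_k\beta_k\}$ present no difficulty, and the whole problem is the division. The natural device is $\frac{1}{a_k+b_k}=\int_0^\infty e^{-t(a_k+b_k)}\,dt$: since a pointwise integral of completely monotone sequences is completely monotone, it suffices that $\{e^{-t(a_k+b_k)}\}_k$ be completely monotone for every $t>0$, and by the discrete Schoenberg-type correspondence between completely alternating and completely monotone sequences (cf.\ \cite{B-C-R}) this holds as soon as $\{a_k+b_k\}$ — equivalently both $\{a_k\}$ and $\{b_k\}$ — is completely alternating. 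This already furnishes a family of affirmative cases: e.g.\ $b_k=\lambda a_k$ with $\lambda>0$, and, running the same argument on $\frac{1}{a_k+b_k}=\alpha_k\cdot\frac{1}{1+b_k/a_k}$, any situation in which $\{b_k/a_k\}$ is completely alternating.

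The obstruction is that complete alternation of $\{a_k\}$ is strictly stronger than $\{1/a_k\}$ being a Hausdorff moment sequence: the hypothesis controls the forward differences of $\{1/a_k\}$ but says nothing about the higher differences of $\{a_k\}$ itself (the weighted Bergman weights $a_k=\binom{k+2}{2}$ have $\{1/a_k\}$ a moment sequence while $\{a_k\}$ is not completely alternating), so the argument above stops well short of the full conjecture. Since conditions (a) and (b) were evidently built in precisely to kill the cheap obstructions, I would at this point suspect the conjecture is false and change tack, looking for a counterexample within a small explicit family of pairs $(\nu_1,\nu_2)$ — for instance a polynomial density on $[0,1]$, which forces $\sup\operatorname{supp}\nu_i=1$ and $\nu_i(\{1\})=0$, plus an atom at some point of $(0,1)$ — with the parameters tuned so that $\{\alpha_k\}$ and $\{\beta_k\}$ cross over at a well-chosen index. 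Since $\frac{1}{a_k+b_k}=\frac{\alpha_k\beta_k}{\alpha_k+\beta_k}$ lies between $\tfrac12\min(\alpha_k,\beta_k)$ and $\min(\alpha_k,\beta_k)$, and the minimum of two moment sequences develops a corner at a crossover, one expects a low-order Hausdorff inequality, such as $\frac{1}{a_m+b_m}-\frac{2}{a_{m+1}+b_{m+1}}+\frac{1}{a_{m+2}+b_{m+2}}<0$, to fail there (the first-order inequality cannot, as $\{a_k+b_k\}$ is nondecreasing). The real work will be not the heuristic but the verification: exhibiting one concrete pair $(\nu_1,\nu_2)$ for which a Hausdorff inequality is provably, strictly violated in the face of all the positivity that genuine moment sequences carry — that is, upgrading the comparison-with-$\min$ heuristic into the explicit, checkable counterexample the problem demands.
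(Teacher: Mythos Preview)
Your reduction to the moment-sequence question and your identification of the affirmative case when $\{a_k\}$ and $\{b_k\}$ are completely alternating are correct and coincide with the paper's Corollary \ref{cor2.6}. Your instinct that the conjecture is false is also right. What differs is the construction of the counterexample.

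Your plan is to engineer measures with atoms so that $\alpha_k$ and $\beta_k$ cross over, then detect a failure of a second-order Hausdorff inequality near the crossing. This is plausible but left entirely unexecuted, and the ``corner in $\min(\alpha_k,\beta_k)$'' heuristic does not by itself force a sign violation in $\frac{\alpha_k\beta_k}{\alpha_k+\beta_k}$; you would still have to do a delicate numerical or asymptotic verification. The paper avoids all of this by working inside a family where $a_k+b_k$ is a \emph{quadratic polynomial in $k$}: take $a_k=(k+r)/r$ and $b_k=(k+s)(k+t)/(st)$, so that
\[
a_k+b_k=\frac{k^2+\bigl(s+t+\tfrac{st}{r}\bigr)k+2st}{st}.
\]
The decisive tool is then a clean algebraic criterion (Proposition \ref{nms}, taken from \cite{A-Ch}): if $p$ is a real polynomial with a nonreal root, then $\{1/p(k)\}_{k\in\mathbb Z_+}$ is \emph{never} a Hausdorff moment sequence. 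Thus subnormality of $M_z$ on $\mathcal H(K_r+K_{s,t})$ holds \emph{iff} the discriminant $(rs+st+tr)^2-8r^2st$ is nonnegative, and e.g.\ $s=1$, $t=2$, $r>2$ gives an explicit counterexample satisfying all of (a), (b), (c). No atoms, no crossover analysis, and no case-by-case checking of Hausdorff inequalities are needed. If you want to repair your proposal rather than replace it, the shortest path is to abandon the $\min$-heuristic and instead look for pairs with $a_k+b_k=p(k)$ a polynomial and invoke the complex-root criterion.
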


An equivalent formulation, in terms of the moment sequence criterion, of the conjecture is the following.
If $\{\tfrac{1}{a_k}\}_{k \in \mathbb Z_+}$ and $\{\tfrac{1}{b_k}\}_{k \in \mathbb Z_+}$ are Hausdorff moment sequences,
does it necessarily follow that $\{\tfrac{1}{a_k+b_k}\}_{k \in \mathbb Z_+}$ is also a Hausdorff moment sequence?

Like the case of the product of two kernels, here we give a class of counter examples to the 
conjecture stated above. Paul McGuire, in a private communication, has   informed the authors 
of an example that they had found. In fact, he says that their example is one of the examples discussed in this note.

%The case of the product
%of two kernels was settled in \cite[Theorem 1.5]{A-Ch-1} and we settle the case of the sum.

These two cases suggest that it may be fruitful to ask when the compression of a subnormal operator to an invariant subspace is again subnormal.

%In a recent paper , the following conjecture is made.

The paper is organized as follows. In section $2$, we provide a class of counter-examples 
which settles the Conjecture \ref{conj}. We also discuss some cases for which answer to this conjecture is affirmative.
In the last section, we try to answer analogously in a certain class of weighted multi-shifts.

\section{Sum of two subnormal reproducing kernels need not be subnormal}

For the construction of counter-examples to the conjecture, 
we make use of the following result, borrowed from \cite[Proposition 4.3]{A-Ch}.
\begin{proposition}\label{nms}
For distinct positive real numbers $a_0,...,a_n$ and non-zero real
numbers $b_0,...,b_n,$ consider the polynomial $p(x)=\Pi_{k=0}^n
(x+a_k+ib_k)(x+a_k-ib_k).$ Then the sequence
$\{\frac{1}{p(l)}\}_{l\in \mathbb Z_+}$ is never a Hausdorff moment
sequence.
\end{proposition}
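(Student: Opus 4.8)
The plan is to argue by contradiction: suppose $\{1/p(l)\}_{l \in \mathbb{Z}_+}$ were a Hausdorff moment sequence, so that there is a positive measure $\nu$ on $[0,1]$ with $1/p(l) = \int_{[0,1]} t^l\, d\nu(t)$ for all $l \in \mathbb{Z}_+$. The key idea is to exploit the partial fraction decomposition of $1/p(x)$. Since $p$ has degree $2n+2$ with the $2n+2$ distinct complex roots $-a_k \pm i b_k$ (they are distinct because the $a_k$ are distinct and the $b_k$ are nonzero, so the conjugate pairs do not collide), we may write
\[
\frac{1}{p(x)} = \sum_{k=0}^n \left( \frac{c_k}{x + a_k + i b_k} + \frac{\bar c_k}{x + a_k - i b_k} \right)
\]
for suitable constants $c_k \in \mathbb{C}$; the coefficients of the conjugate roots are conjugates of one another because $p$ has real coefficients. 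Combining each conjugate pair over a common denominator gives
\[
\frac{1}{p(x)} = \sum_{k=0}^n \frac{\alpha_k x + \beta_k}{(x+a_k)^2 + b_k^2}
\]
with $\alpha_k, \beta_k \in \mathbb{R}$, and one checks $\alpha_k \ne 0$ for at least one $k$ (indeed all $k$, since $c_k \ne 0$ as $-a_k \pm ib_k$ is a simple root).

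Next I would evaluate this at the integers $l \in \mathbb{Z}_+$ and recognize each term as itself a moment-type expression. Writing $-a_k + i b_k = r_k e^{i\theta_k}$ in polar form, one has the identity
\[
\frac{\alpha_l}{(l+a_k)^2+b_k^2} \;=\; \re\!\left( \frac{c_k}{l + a_k + i b_k} \right) \cdot 2 \quad\text{-type expression},
\]
and more usefully $1/((x+a_k)^2+b_k^2)$ is, up to a positive constant, the Laplace/Cauchy transform of an oscillating kernel: $\tfrac{1}{(l+a_k)^2+b_k^2} = \int_0^1 t^{l+a_k-1} \cdot \tfrac{\sin(b_k \log(1/t))}{b_k}\,dt$ (via $\int_0^\infty e^{-(l+a_k)s}\tfrac{\sin b_k s}{b_k}\,ds$ after substituting $t = e^{-s}$). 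Thus $1/p(l)$ equals $\int_0^1 t^l\, g(t)\, dt$ for an explicit real-analytic function $g$ on $(0,1)$ that is a finite sum of terms of the form $t^{a_k-1}\big(A_k \cos(b_k\log\tfrac1t) + B_k\sin(b_k\log\tfrac1t)\big)$, and at least one $b_k$-oscillatory term has nonzero amplitude. Since the moment sequence of a measure on $[0,1]$ determines the measure uniquely (Hausdorff), $\nu$ must coincide with $g(t)\,dt$; in particular $g \ge 0$ a.e. on $(0,1)$.

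The contradiction then comes from showing $g$ must change sign. The hard part — the main obstacle — is precisely this sign-change argument, since $g$ is a combination of several oscillating exponentials with different rates $a_k$ and frequencies $b_k$, and I must rule out miraculous cancellation. I would handle it by looking at the behavior of $g$ near $t \to 0^+$: order the indices so that $a_0 = \min_k a_k$; if this minimum is attained at a single $k=0$, then near $t=0$ the term $t^{a_0-1}(A_0\cos(b_0\log\tfrac1t)+B_0\sin(b_0\log\tfrac1t))$ dominates all others, and since $(A_0,B_0)\ne(0,0)$ while $b_0 \ne 0$, this dominant term oscillates through negative values as $\log\tfrac1t \to \infty$, forcing $g<0$ on a sequence of intervals accumulating at $0$ — contradicting $g\ge 0$. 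If the minimum of the $a_k$ is attained at several indices, the same argument applies to the sum of those finitely many equal-rate terms: it is $t^{a_0-1}$ times a nonzero almost-periodic trigonometric polynomial in $\log\tfrac1t$, which (being nonconstant, as the $b_k$ are distinct and nonzero) takes negative values arbitrarily close to $t=0$. Either way we contradict positivity of $\nu$, so no such measure exists and $\{1/p(l)\}$ is not a Hausdorff moment sequence. The routine verifications I would defer are the partial-fraction bookkeeping, the Laplace-transform identity, and the standard dominance estimate near $t=0$.
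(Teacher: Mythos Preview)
The paper does not supply its own proof of this proposition; it is quoted verbatim from \cite[Proposition~4.3]{A-Ch}, so there is no in-paper argument to compare against. Your approach is nonetheless correct and is the natural one: partial fractions over the simple conjugate roots, the Mellin-type identity $\int_0^1 t^{l+a-1}e^{ib\log(1/t)}\,dt=(l+a+ib)^{-1}$ to write $1/p(l)=\int_0^1 t^l g(t)\,dt$ with $g$ a finite real combination of $t^{a_k-1}\cos(b_k\log\tfrac1t)$ and $t^{a_k-1}\sin(b_k\log\tfrac1t)$, uniqueness of the Hausdorff moment problem to force $d\nu=g(t)\,dt$, and then a sign change of $g$ near $t=0$ coming from the dominant oscillatory term. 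This is essentially the same mechanism the paper itself exploits later in Theorem~\ref{thm2.16}.

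Two minor cleanups. First, your assertion that $\alpha_k=2\,\mathrm{Re}\,c_k\neq 0$ does not follow from $c_k\neq 0$ alone; but this is harmless, since what the argument actually needs is $(A_k,B_k)\neq(0,0)$, which is equivalent to $c_k\neq 0$ and which you do state. Second, the case ``minimum of the $a_k$ attained at several indices'' is vacuous here, since the hypothesis already says the $a_k$ are distinct; that paragraph can simply be dropped.
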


For $r>0,$ let $K_r$ be a positive definite kernel given by
\beqn K_{r}(z,w) :=\sum_{k \in \mathbb Z_+} \frac{k+r}{r} (z\bar{w})^k \qquad (z,w\in
\mathbb D).\eeqn
 The case $r=1$ corresponds to the Bergman kernel.
 It is easy to see that the multiplication operator $M_z$ on $\mathcal H(K_{r})$
 is a contractive and subnormal and the representing measure is $r x^{r-1} dx.$

For $s,t>0,$ consider the multiplication operator $M_z$ on
$\mathcal H(K_{s, t}),$ where
\beqn K_{s,t}(z,w):=\sum_{k \in \mathbb Z_+} \frac{(k+s)(k+t)}{st}(z\bar{w})^k \qquad (z,w\in \mathbb D).\eeqn
The case $s =1$ and $t =2,$ corresponds to the kernel
$(1- z \bar{w})^{-3}.$
Note that $M_z$ is a contractive subnormal with the representing
measure $\nu$ is given by

\begin{eqnarray}  d\nu(x) &=& \left \{\begin{array} {ll}
     -s^2x^{s-1}\log{x}~ dx \; \;\mbox{if}~s=t
      \cr st\frac{x^{t-1}-x^{s-1}}{s-t} dx~ \; \; \;\mbox{if}~ s \neq t.
        \end{array} \right. \nonumber \end{eqnarray}

One  easily verifies that $K_r$ and $K_{s,t}$ both
satisfy all the conditions $(a), (b)$ and $(c)$ of the Conjecture \ref{conj}.
But the multiplication operator on their sum need not be subnormal for all possible choices of $s,t>0$. This
follows from the following theorem.

\begin{theorem} \label{thm}
The multiplication operator $M_z$ on $\mathcal H(K_r+K_{s,t})$ is subnormal if and only if
\beq \label{eq0.1} (rs+st+tr)^2\geq 8 r^2st.\eeq
\end{theorem}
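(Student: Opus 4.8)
The plan is to reduce the subnormality question to the moment-sequence criterion stated in the introduction: $M_z$ on $\mathcal H(K_r+K_{s,t})$ is subnormal if and only if $\{1/c_k\}_{k\in\mathbb Z_+}$ is a Hausdorff moment sequence, where $c_k=\frac{k+r}{r}+\frac{(k+s)(k+t)}{st}$ is the $k$-th Taylor coefficient of $K_r+K_{s,t}$. First I would clear denominators and write $c_k=\frac{st(k+r)+r(k+s)(k+t)}{rst}$, so that $\frac1{c_k}=\frac{rst}{q(k)}$ where $q(x)=r x^2+\big(st+r(s+t)\big)x+\big(rst+rst\big)$ — more carefully, $q(x)=rx^2+(st+rs+rt)x+3rst$ after collecting the constant term $st\cdot r + r\cdot st = 2rst$ from the linear pieces plus $rst$ from $rx\cdot$, so I must recompute: $st(k+r)+r(k+s)(k+t)=rk^2+(st+r(s+t))k+(str+rst)=rk^2+(st+rs+rt)k+2rst$. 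Thus $\frac1{c_k}=\frac{rst}{rk^2+(rs+st+tr)k+2rst}$, and subnormality hinges on whether this rational function of $k$ is a Hausdorff moment sequence.

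Next I would factor the quadratic $q(x)=rx^2+(rs+st+tr)x+2rst=r\big(x-\alpha\big)\big(x-\beta\big)$ where $\alpha,\beta$ are the two roots. The discriminant is $\Delta=(rs+st+tr)^2-4r\cdot 2rst=(rs+st+tr)^2-8r^2st$, which is exactly the quantity in \eqref{eq0.1}. So the inequality $(rs+st+tr)^2\ge 8r^2st$ is precisely the condition that $q$ has real roots. When $\Delta\ge 0$ the roots are real and, since the coefficients $rs+st+tr>0$ and $2rst>0$, by Vieta both roots are negative; writing $\alpha=-a_0$, $\beta=-a_1$ with $a_0,a_1>0$ (possibly equal), partial fractions give $\frac1{c_k}=\frac{rst}{r}\cdot\frac1{(k+a_0)(k+a_1)}=\frac{st}{a_0-a_1}\big(\frac1{k+a_1}-\frac1{k+a_0}\big)$ (with the obvious confluent modification when $a_0=a_1$). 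Each term $\frac1{k+a}=\int_0^1 t^{k+a-1}\,dt$ is a Hausdorff moment sequence, and I would check that the particular linear combination appearing here has a \emph{positive} representing density on $[0,1]$ — exactly as in the explicit formulas for the representing measures of $K_r$ and $K_{s,t}$ already recorded in the excerpt — so $M_z$ is subnormal. For the converse, when $\Delta<0$ the quadratic factors as $(x+a_0+ib_0)(x+a_0-ib_0)$ with $a_0>0$ and $b_0\neq 0$, so $\{1/c_k\}$ has the form $\{1/p(k)\}$ with $p$ as in Proposition~\ref{nms} (degree-one version, $n=0$), and that proposition tells us this is never a Hausdorff moment sequence; hence $M_z$ is not subnormal.

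The routine-but-essential step is the positivity check in the $\Delta\ge 0$ case: one must verify that the signed combination $\frac1{k+a_1}-\frac1{k+a_0}$ scaled by $\frac{st}{a_0-a_1}$ indeed integrates a nonnegative function against $t^k$ on $[0,1]$, i.e. that the density $\frac{st}{a_0-a_1}\big(t^{a_1-1}-t^{a_0-1}\big)$ (or $st\cdot t^{a_0-1}(-\log t)$ in the confluent case) is $\ge 0$ on $[0,1]$; this holds because $t^{a_1-1}-t^{a_0-1}$ and $a_0-a_1$ always have the same sign on $(0,1)$. The main obstacle I anticipate is purely bookkeeping: getting the constant term of $q$ right ($2rst$, not $3rst$ or $rst$) and keeping the normalization constant $rst$ straight, since an arithmetic slip there would change the threshold. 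I would double-check the boundary case $\Delta=0$ (double root) separately to confirm it falls on the subnormal side, consistent with the non-strict inequality in the statement, and I would also confirm $M_z$ is bounded on $\mathcal H(K_r+K_{s,t})$ (which follows from $\lim c_k/c_{k+1}=1$, hence the weighted shift has bounded weights) so that the subnormality criterion applies.
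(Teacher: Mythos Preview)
Your proposal is correct and follows essentially the same route as the paper: compute the coefficient sequence $c_k$, factor the resulting quadratic in $k$, observe that its discriminant is exactly $(rs+st+tr)^2-8r^2st$, and then use the known subnormality when the roots are real and negative while invoking Proposition~\ref{nms} when they are non-real. The only cosmetic difference is that the paper packages your partial-fraction positivity check as the single observation $K_r+K_{s,t}=2K_{s',t'}$ with $s',t'>0$, deferring to the representing measure for $K_{s,t}$ already recorded just before the theorem.
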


\begin{proof}
Notice that \beqn (K_r+K_{s,t})(z,w)=\sum_{k \in \mathbb Z_+}
\frac{k^2+(s+t+\frac{st}{r})k+2st} {st}(z\bar{w})^k \qquad (z,w\in \mathbb D).\eeqn

The roots of the polynomial $x^2+(s+t+\frac{st}{r})x+2st$
are
\beqn x_1 &:=& \frac{-(s+t+\frac{st}{r})+\sqrt{(s+t+\frac{st}{r})^2-8st}}{2}
~\mbox{and}~ \cr x_2 &:=& \frac{-(s+t+\frac{st}{r})-\sqrt{(s+t+\frac{st}{r})^2-8st}}{2}.\eeqn
Suppose that $ (rs+st+tr)^2\geq 8 r^2st.$ Then the kernel
$K_r+K_{s,t}$ will be of the form $2K_{s^{\prime}, t^{\prime}}$ where $s^{\prime}= -x_1$ and $t^{\prime}=-x_2.$
Hence, $M_z$ on $\mathcal  H(K_r+K_{s,t})$ is a subnormal operator.

Conversely, assume that $ (rs+st+tr)^2 < 8 r^2st.$
By Proposition $\ref{nms}$, it follows that $M_z$ on $\mathcal  H(K_r+K_{s,t})$
can not be subnormal.
\end{proof}

\begin{remark}
 If we choose $s=1, ~t=2$ and $r > 2,$ then the inequality \eqref{eq0.1} is not valid.
\end{remark}

We also point out that if $K_1$ and $K_2$ are any two reproducing kernels such that
the multiplication operators on $\mathcal{H}(K_1)$
and $\mathcal{H}(K_2)$ are hyponormal, then the multiplication operator on $\mathcal{H}(K_1+K_2)$
need not be hyponormal. An example illustrating this is given below.

\begin{example}
For any $s,t>0,$ consider the reproducing kernel $K^{s,t}$ given by
\beqn K^{s,t}(z,w):=1+sz\bar{w}+s^2(z\bar{w})^2+t\frac{(z\bar{w})^3}{1-z\bar{w}}.\eeqn
Note that $K^{s,t}$ defines a reproducing kernel on the unit disc $\mathbb D$
and the multiplication operator $M_z$ on $\mathcal{H}(K^{s,t})$ can be seen as
a weighted shift operator with weights $(\sqrt{\frac {1}{s}},\sqrt{\frac {1}{s}},\sqrt{\frac{s^2}{t}},1,1, \cdots).$
Thus, it follows that $M_z$ on $\mathcal{H}(K^{s,t})$ is hyponormal if and only if
$s^2\leq t\leq s^3.$

Observe that $M_z$ on $\mathcal{H}(K^{s,t}+K^{s',t'})$ can be realized as a weighted shift operator with weights
$(\sqrt{\frac{2}{s+s'}},\sqrt{\frac{s+s'}{s^2+s'^2}},\sqrt{\frac{s^2+s'^2}{t+t'}},1,1,\cdots).$ For the hyponormality of this weighted shift operator,
it is necessary that $\frac{2}{s+s'}\leq \frac{s+s'}{s^2+s'^2},$ which is true only when $s=s'.$

\end{example}

We remark that this is different from the case of the product of two kernels, where,
the hyponormality of the multiplication operator on the Hilbert space $\mathcal H(K_1 K_2)$
follows as soon as we assume they are hyponormal on the two Hilbert spaces $\mathcal H(K_1)$ and $\mathcal H(K_2)$, see \cite{B-S}.

%%%%%%%%%%%%%%%%%%%%%%%%%%%%%%%%%%%%%%%%%%%%%%%%%%%%%%%%%%%%%%%%%%%%%%%%%%%%%%%%%%%%%%%%%%%%%%%%%%%%%%%%%%%%%%%%%%%%%%%%%%%%%%%%%%%%%%%%%%%
%%%%%%%%%%%%%%%%%%%%%%%%%%%%%%%%%%%%%%%%%%%%%%%%%%%%%%%%%%%%%%%%%%%%%%%%%%%%%%%%%%%%%%%%%%%%%%%%%%%%%%%%%%%%%%%%%%%%%%%%%%%%%%%%%%%%%%%%%%%

If $T \in {B}({\mathcal H})$ is left invertible then the operator $T^{\prime}$ given by $T^{\prime}=T(T^*T)^{-1}$
is said to be the operator {\it Cauchy dual} to $T.$
The following result has been already recorded in \cite[Proposition 6]{At-2}, which may be paraphrased as follows:
\begin{theorem} \label{At-thm}
Let $K(z,w)=\sum_{k \in \mathbb Z_+} a_k(z\bar{w})^k$ be a positive definite kernel on $\mathbb D$
and $M_z$ be the multiplication operator on $\mathcal H(K).$ Assume that $M_z$ is left invertible.
Then the followings are equivalent:
\begin{itemize}
 \item[(i)] $\{a_k\}_{k \in \mathbb Z_+}$ is a completely alternating sequence.
 \item[(ii)] The Cauchy dual $M^{\prime}_z$ of $M_z$ is completely hyperexpansive.
 \item[(iii)] For all $t>0$, $\{\frac{1}{t(a_k-1)+1}\}_{k \in \mathbb Z_+}$ is a completely monotone sequence.
 \item[(iv)]For all $t>0$, the multiplication operator $M_z$ on $\mathcal{H}(tK+(1-t)S)$ is contractive subnormal, 
 where $S$ is the Szeg$\ddot{o}$ kernel on $\mathbb D$.
\end{itemize}

\end{theorem}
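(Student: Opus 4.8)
The plan is to establish (i)$\Leftrightarrow$(ii), (iii)$\Leftrightarrow$(iv), (i)$\Rightarrow$(iii) and (iii)$\Rightarrow$(i), working throughout with the coefficient sequence $\{a_k\}$ and using the normalization $a_0=1$ (which is what makes conditions (iii)--(iv) meaningful). The two outer equivalences are essentially bookkeeping in the weighted shift model of the introduction. For (i)$\Leftrightarrow$(ii): since $M_z$ is left invertible, its Cauchy dual $M_z^{\prime}=M_z(M_z^*M_z)^{-1}$ is the weighted shift whose weights are the reciprocals of those of $M_z$, hence is unitarily equivalent to the multiplication operator on $\mathcal H(K^{\prime})$ with $K^{\prime}(z,w)=\sum_{k\in\mathbb Z_+}\frac{1}{a_k}(z\bar w)^k$; applying to $K^{\prime}$ the criterion for complete hyperexpansivity quoted above (\cite[Proposition 3]{At-2}), $M_z^{\prime}$ is completely hyperexpansive precisely when $\{1/(1/a_k)\}_{k\in\mathbb Z_+}=\{a_k\}_{k\in\mathbb Z_+}$ is completely alternating. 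For (iii)$\Leftrightarrow$(iv): the kernel $tK+(1-t)S$ has coefficients $ta_k+(1-t)=t(a_k-1)+1$, so by the subnormality criterion recalled above, $M_z$ on $\mathcal H(tK+(1-t)S)$ is contractive subnormal iff $\{1/(t(a_k-1)+1)\}_{k\in\mathbb Z_+}$ is a Hausdorff moment sequence, that is, completely monotone; here $t(a_k-1)+1>0$ for all $t>0$, which under (i) follows from $a_k\ge a_0=1$ and conversely is forced by (iii) since completely monotone sequences have nonnegative terms (this positivity is also what makes $tK+(1-t)S$ a positive definite kernel).

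The substantive step is (i)$\Rightarrow$(iii). Fix $t>0$ and set $d_k:=t(a_k-1)+1$; then $d_k\ge 1$ for all $k$, and $\{d_k\}$ is completely alternating because $\Delta^n d_k=t\,\Delta^n a_k$ for $n\ge 1$. It then suffices to prove the sublemma that \emph{the reciprocal of a completely alternating sequence of positive reals is completely monotone}. Writing $\frac{1}{d_k}=\int_0^\infty e^{-sd_k}\,ds$ and noting that $(-1)^n\Delta^n$ is a finite linear combination that may be pulled inside the integral, it is enough to show $\{e^{-sd_k}\}_{k\in\mathbb Z_+}$ is completely monotone for each fixed $s>0$. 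For this I would use the elementary integral representation of completely alternating sequences: since $\{\Delta d_k\}$ is a Hausdorff moment sequence, $d_k=d_0+\int_{[0,1]}\frac{1-x^k}{1-x}\,d\rho(x)$ for a finite positive measure $\rho$ (with $\frac{1-x^k}{1-x}$ read as $k$ at $x=1$). Approximating this integral by finite sums, and using that finite products and pointwise limits of completely monotone sequences are again completely monotone, one reduces to the identity
\beqn e^{-sc\frac{1-x^k}{1-x}}=e^{-sc/(1-x)}\sum_{n\geq 0}\frac{1}{n!}\Big(\frac{sc}{1-x}\Big)^n(x^n)^k \qquad(0\le x<1,\ c>0),\eeqn
whose right-hand side, for fixed $x$, is a nonnegative combination of the geometric sequences $\{(x^n)^k\}_{k\in\mathbb Z_+}$, each completely monotone.

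For (iii)$\Rightarrow$(i): under (iii), for every $t>0$ the constant sequence $1$ minus the completely monotone sequence $\{1/(t(a_k-1)+1)\}$ satisfies the sign inequalities defining complete alternation, hence $\big\{\tfrac{1}{t}\big(1-\tfrac{1}{t(a_k-1)+1}\big)\big\}_{k\in\mathbb Z_+}=\big\{\tfrac{a_k-1}{t(a_k-1)+1}\big\}_{k\in\mathbb Z_+}$ is completely alternating for every $t>0$; letting $t\to0^+$ it converges pointwise to $\{a_k-1\}_{k\in\mathbb Z_+}$, and since these inequalities survive pointwise limits, $\{a_k-1\}$, and hence $\{a_k\}$, is completely alternating. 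I expect the sublemma in the previous paragraph to be the only real obstacle; it can alternatively be obtained from the theory of negative definite and Bernstein functions on abelian semigroups (\cite{B-C-R}) --- $x\mapsto 1/x$ being completely monotone on $(0,\infty)$ and a positive completely alternating sequence being the discrete analogue of a Bernstein function --- but the self-contained computation above seems cleaner, and the remaining steps are routine.
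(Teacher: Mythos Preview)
The paper does not give its own proof of this theorem: it simply records the statement as a paraphrase of \cite[Proposition~6]{At-2} and moves on. So there is nothing in the paper to compare your argument against line by line.

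That said, your proposal is correct. The equivalences (i)$\Leftrightarrow$(ii) and (iii)$\Leftrightarrow$(iv) are precisely the weighted-shift bookkeeping you carry out, and your (iii)$\Rightarrow$(i) limiting argument is valid: for $n\ge 1$ the alternating binomial sum of a constant sequence vanishes, so $\{1-c_k\}$ is completely alternating whenever $\{c_k\}$ is completely monotone, and the inequalities pass to the pointwise limit $t\to 0^+$. The substantive step (i)$\Rightarrow$(iii), which you reduce to the sublemma that the reciprocal of a positive completely alternating sequence is completely monotone, is argued soundly via the Laplace identity $1/d_k=\int_0^\infty e^{-sd_k}\,ds$ together with the representation $d_k=d_0+\int_{[0,1]}\frac{1-x^k}{1-x}\,d\rho(x)$. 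The one place to be a little more explicit is the discrete approximation of $\rho$: since $\frac{1-x^k}{1-x}\le k$ on $[0,1]$ for each fixed $k$, weak approximation of $\rho$ by finitely supported measures gives pointwise convergence of $d_k^{(N)}$ to $d_k$, and hence of $e^{-sd_k^{(N)}}$ to $e^{-sd_k}$; complete monotonicity then passes to the limit. Your power-series identity handles the case $0\le x<1$, and $x=1$ gives the geometric sequence $\{e^{-sck}\}_k$ directly. This is, in effect, a by-hand version of the Bernstein-function argument you allude to from \cite{B-C-R}, which is the framework underlying Athavale's original proof; your self-contained computation is a legitimate alternative.
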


\begin{remark} \label{rem2.6}
If $\{a_k\}_{k \in \mathbb Z_+}$ is a completely alternating sequence then by putting $t=1$ in part (iii) of Theorem \ref{At-thm},
it follows that $\{\frac{1}{a_k}\}_{k \in \mathbb Z_+}$ is a completely monotone sequence.
\end{remark}

\begin{corollary}\label{cor2.6}
Let $K_1(z,w)=\sum_{k \in \mathbb Z_+} a_k (z\bar{w})^k$ and $K_2(z,w)=\sum_{k \in \mathbb Z_+} b_k(z\bar{w})^k$  be any two reproducing kernels
such that $\{a_k\}_{k \in \mathbb Z_+}$ and $\{b_k\}_{k \in \mathbb Z_+}$ are completely alternating sequences, then the multiplication operator
$M_z$ on $\mathcal{H}(K_1+K_2)$ is subnormal.
\end{corollary}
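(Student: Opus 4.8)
The plan is to reduce the statement to the Hausdorff-moment criterion recalled in Section~1: once $M_z$ on $\mathcal H(K_1+K_2)$ is known to be bounded, it is a contractive subnormal operator if and only if $\{\tfrac{1}{a_k+b_k}\}_{k\in\mathbb Z_+}$ is a Hausdorff moment sequence, equivalently a completely monotone sequence. So it suffices to establish two things: that $\{\tfrac{1}{a_k+b_k}\}_{k\in\mathbb Z_+}$ is completely monotone, and that the associated multiplication operator is bounded.

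For the first point, I would observe that the inequalities defining a completely alternating sequence, namely $\sum_{j=0}^n(-1)^j\binom{n}{j}c_{m+j}\le 0$, are linear in $\{c_k\}$; adding the corresponding inequalities for $\{a_k\}$ and for $\{b_k\}$ shows that $\{a_k+b_k\}_{k\in\mathbb Z_+}$ is again a completely alternating sequence of positive reals. By Remark~\ref{rem2.6} (that is, part (iii) of Theorem~\ref{At-thm} with $t=1$), its reciprocal $\{\tfrac{1}{a_k+b_k}\}_{k\in\mathbb Z_+}$ is then completely monotone, hence a Hausdorff moment sequence. Alternatively, one can bypass Remark~\ref{rem2.6} via $\tfrac{1}{c}=\int_0^\infty e^{-sc}\,ds$, using that $\{e^{-sc_k}\}_{k\in\mathbb Z_+}$ is completely monotone for every $s>0$ when $\{c_k\}$ is completely alternating, and that a positive-weight integral of completely monotone sequences remains completely monotone.

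For boundedness, taking $n=1$ in the definition of a completely alternating sequence gives $c_m\le c_{m+1}$, so $\{a_k\}$, $\{b_k\}$, and hence $\{a_k+b_k\}$ are non-decreasing; therefore the weights $\sqrt{\tfrac{a_k+b_k}{a_{k+1}+b_{k+1}}}$ of the weighted-shift realization of $M_z$ on $\mathcal H(K_1+K_2)$ are all at most $1$, so $M_z$ is a contraction and in particular bounded. Combining this with the first point and the criterion of Section~1 yields the conclusion. The argument is routine once Remark~\ref{rem2.6} is available; the one step I would be careful to record explicitly is this reduction to the bounded case, since the statement of the corollary does not \emph{a priori} presuppose that $M_z$ is bounded.
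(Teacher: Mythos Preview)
Your argument is correct and follows essentially the same route as the paper: observe that the sum of two completely alternating sequences is completely alternating, and then invoke Remark~\ref{rem2.6} to conclude that $\{\tfrac{1}{a_k+b_k}\}$ is completely monotone, hence $M_z$ is subnormal. Your additional care in verifying boundedness (via monotonicity of completely alternating sequences) and the alternative Laplace-integral justification are nice touches that the paper omits, but the core proof is identical.
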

\begin{proof}
 It is easy to verify that the sum of two completely alternating sequences is completely alternating. The desired
 conclusion follows immediately from Remark \ref{rem2.6}.

\end{proof}

\begin{remark}
Note that $\{\frac{k+r}{r}\}_{k \in \mathbb Z_+}$ is a completely alternating sequence but the sequence
$\{\frac{(k+s)(k+t)}{st}\}_{k \in \mathbb Z_+}$ is not completely alternating. So, the reproducing kernels
$K_r$ and $K_{s,t}$ discussed in Theorem \ref{thm} does not satisfy the hypothesis of Corollary \ref{cor2.6}.

\end{remark}

\begin{proposition} \label{pro2.9}
Let $K(z,w)=\sum_{k \in \mathbb Z_+} a_k (z\bar{w})^k$
be any positive definite kernel such that the multiplication operator $M_z$ 
on $\mathcal{H}(S+K)$ is subnormal. Then the multiplication operator on 
$\mathcal H(K)$ is subnormal.
\end{proposition}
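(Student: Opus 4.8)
The plan is to translate everything into the language of Hausdorff moment sequences, exactly as in the reformulation of the conjecture given immediately after its statement. Recall that $S(z,w)=\sum_{k\in\mathbb Z_+}(z\bar w)^k$, so that $S+K$ has Taylor coefficients $1+a_k$. By the characterization of subnormal unilateral weighted shifts, the hypothesis that $M_z$ on $\mathcal H(S+K)$ is subnormal produces a finite positive representing measure, and positivity of the coefficients $a_k$ forces it to be supported in $[0,1]$ (the numbers $\tfrac{1+a_0}{1+a_k}$ must stay bounded in $k$; in particular $M_z$ on $\mathcal H(S+K)$ is then automatically a contraction). Thus the hypothesis is precisely that $\{\tfrac{1}{1+a_k}\}_{k\in\mathbb Z_+}$ is a Hausdorff moment sequence, say $\tfrac{1}{1+a_k}=\int_{[0,1]}t^k\,d\mu(t)$ for a finite positive measure $\mu$ on $[0,1]$, and the goal is to show that $\{\tfrac{1}{a_k}\}_{k\in\mathbb Z_+}$ is then also a Hausdorff moment sequence.

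The key observation is that $\mu([0,1])=\tfrac{1}{1+a_0}<1$ (using $a_0>0$), so that $c_k:=\int_{[0,1]}t^k\,d\mu(t)\le\mu([0,1])<1$ for every $k$, and hence
\[
\frac{1}{a_k}=\frac{c_k}{1-c_k}=\sum_{n=1}^{\infty}c_k^{n}\qquad(k\in\mathbb Z_+),
\]
the geometric series being convergent. Since the pointwise product of two Hausdorff moment sequences is again one, each sequence $\{c_k^{n}\}_k$ is a Hausdorff moment sequence; explicitly, if $\mu_n$ denotes the pushforward of the $n$-fold product measure $\mu\times\cdots\times\mu$ on $[0,1]^n$ under $(t_1,\dots,t_n)\mapsto t_1\cdots t_n$, then $\mu_n$ is a positive measure on $[0,1]$ with $\int_{[0,1]}t^k\,d\mu_n(t)=c_k^{n}$ and $\mu_n([0,1])=\mu([0,1])^n$. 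Now set $\sigma:=\sum_{n=1}^{\infty}\mu_n$. Because $\sum_{n\ge1}\mu([0,1])^n=\tfrac{\mu([0,1])}{1-\mu([0,1])}=\tfrac{1}{a_0}<\infty$, the measure $\sigma$ is a finite positive measure on $[0,1]$, and monotone convergence gives $\int_{[0,1]}t^k\,d\sigma(t)=\sum_{n\ge1}c_k^{n}=\tfrac{1}{a_k}$. Hence $\{\tfrac{1}{a_k}\}_{k\in\mathbb Z_+}$ is a Hausdorff moment sequence. Moreover $\tfrac{1}{a_{k+1}}=\int_{[0,1]}t^{k+1}\,d\sigma\le\int_{[0,1]}t^k\,d\sigma=\tfrac{1}{a_k}$, so the weights $\sqrt{a_k/a_{k+1}}$ of $M_z$ on $\mathcal H(K)$ are all at most $1$; thus $M_z$ is a contraction there, and by the criterion recalled in the Introduction it is subnormal.

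The argument is short, and its only mildly delicate point is the opening reduction: one must be sure to extract a genuine \emph{Hausdorff} moment sequence (representing measure on $[0,1]$), not merely a moment sequence on a larger interval, from the subnormality of $M_z$ on $\mathcal H(S+K)$ — and this is exactly where positivity of the $a_k$ enters. Everything after that is the soft calculus of Hausdorff moment sequences: the expansion $\tfrac{1}{a_k}=\sum_{n\ge1}(1+a_k)^{-n}$, closure under pointwise products, and closure under countable sums whenever the series of zeroth terms converges. I do not anticipate any further difficulty.
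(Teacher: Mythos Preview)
Your proof is correct and follows essentially the same route as the paper: both hinge on the geometric-series identity $\tfrac{1}{a_k}=\sum_{n\ge 1}(1+a_k)^{-n}$, then use that powers of the completely monotone (equivalently, Hausdorff moment) sequence $\{(1+a_k)^{-1}\}$ remain completely monotone and that this class is closed under the relevant limit/sum. Your version is a bit more explicit---you actually build the representing measure $\sigma=\sum_{n\ge 1}\mu_n$ and justify why the Berger measure lands on $[0,1]$ from the boundedness of the moments---whereas the paper simply invokes ``limit of completely monotone sequences''; but the substance is the same.
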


\begin{proof}
From subnormality of $M_z$ on $\mathcal{H}(S+K),$ it follows that 
$\{\tfrac{1}{1+a_k}\}_{k \in \mathbb Z_+}$ is a completely monotone sequence. 
Thus, $\{1-\tfrac{1}{1+a_k}\}_{k \in \mathbb Z_+}$ is completely alternating. Note that
\beqn (a_k)^{-1} &=&(1+a_k)^{-1}(1-\tfrac{1}{1+a_k})^{-1}
\cr &=& \sum_{j=1}^{\infty}\left(\tfrac{1}{1+a_k}\right)^j.\eeqn

Observe that $\{\tfrac{1}{(1+a_k)^j}\}_{k \in \mathbb Z_+}$ is a completely monotone sequence
for all $j \geq 1.$ Now, being the limit of completely monotone sequences, $\{a_k^{-1}\}_{k \in \mathbb Z_+}$
is completely monotone.

\end{proof}

\begin{remark}
We have following remarks:
\begin{itemize}
 \item [(i)] The converse of the Proposition \ref{pro2.9} is not true (see the example discussed in part (ii) of the Remark \ref{rem2.13}).
 \item [(ii)] If we replace Szeg$\ddot{o}$ kernel $S$ by Bergman kernel then the conclusion of the Proposition \ref{pro2.9}
 need not be true. For example, by using Proposition \ref{nms}, one may choose $\alpha > 0$ such that 
 the sequence $\{\frac{1}{k^2+\alpha k + 1}\}_{k \in \mathbb Z_+}$ is not completely monotone
 but the sequence $\{\frac{1}{k^2+(\alpha +1)k+2}\}_{k \in \mathbb Z_+}$ is completely monotone.
\end{itemize}

\end{remark}

We use the convenient Pochhammer symbol given by $(x)_n:=\frac{\Gamma(x+n)}{\Gamma(x)},$
where $\Gamma$ denotes the gamma function.

For $\lambda,\mu>0,$ consider the positive definite kernel \beqn
K_{\lambda,\mu}(z,w)=\sum_{k \in \mathbb Z_+}
\frac{(\lambda)_k}{(\mu)_k}(z\bar{w})^k \qquad (z,w\in \mathbb
D).\eeqn It is easy to see that the case $\mu=1$ corresponds to the
kernel $(1-z\bar{w})^{-\lambda}.$ Note that the multiplication
operator $M_z$ on $\mathcal H( K_{\lambda,\mu})$ may be realized as
a weighted shift operator with weight sequence
$\{\sqrt{\tfrac{k+\mu}{k+\lambda}}\}_{k \in \mathbb Z_+}.$

First part of the following theorem is proved in \cite{C-P-Y} and the representing measure is given in \cite[Lemma 2.2]{C-D}.
Here, we provide a proof for the second part only.

\begin{theorem} \label{thm2.8}
The multiplication operator $M_z$ on $\mathcal{H}(K_{\lambda,\mu})$ is
\begin{itemize}
\item[(i)]subnormal if and only if $\lambda\geq \mu.$ In this case, the representing measure $\nu$ of $M_z$ is given by

\begin{eqnarray}  d\nu(x) &=& \left \{\begin{array} {ll}
     \frac{\Gamma(\lambda)}{\Gamma(\mu)\Gamma(\lambda-\mu)} x^{\mu-1}(1-x)^{\lambda-\mu-1} dx \; \;\mbox{if}~ \lambda > \mu
      \cr \delta_1(x)dx \hskip4.0cm  \mbox{if}~\lambda = \mu,
        \end{array} \right. \nonumber \end{eqnarray}
where $\delta_1$ is the Dirac delta function.
% \beqn \frac{\Gamma(\lambda)}{\Gamma(\mu)\Gamma(\lambda-\mu)} x^{\mu-1}(1-x)^{\lambda-\mu-1} dx.\eeqn

\item[(ii)]completely hyperexpansive if and only if $\lambda\leq \mu\leq \lambda+1.$
\end{itemize}

\end{theorem}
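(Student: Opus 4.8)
The plan is to convert both assertions into the moment-sequence criteria recorded in the introduction, applied to the coefficient sequence $a_k=(\lambda)_k/(\mu)_k$, so that $1/a_k=(\mu)_k/(\lambda)_k$. For part~(i) the relevant criterion is that $M_z$ on $\mathcal{H}(K_{\lambda,\mu})$ is subnormal exactly when $\{1/a_k\}_{k\in\mathbb{Z}_+}$ is the moment sequence of a positive representing measure, and for part~(ii) that $M_z$ is completely hyperexpansive exactly when $\{1/a_k\}_{k\in\mathbb{Z}_+}$ is completely alternating.

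For the sufficiency half of part~(i), I would produce the measure directly from the Beta integral. When $\lambda>\mu$, the identity
\[
\int_0^1 x^{k}\,x^{\mu-1}(1-x)^{\lambda-\mu-1}\,dx=B(\mu+k,\lambda-\mu)=\frac{\Gamma(\mu+k)\,\Gamma(\lambda-\mu)}{\Gamma(\lambda+k)}
\]
shows that $\frac{\Gamma(\lambda)}{\Gamma(\mu)\Gamma(\lambda-\mu)}\int_0^1 x^k\,x^{\mu-1}(1-x)^{\lambda-\mu-1}\,dx=(\mu)_k/(\lambda)_k=1/a_k$; since $\lambda>\mu>0$ the density is integrable and non-negative, so it is a genuine positive measure on $[0,1]$ and realises $\{1/a_k\}$ as a moment sequence with exactly the stated $\nu$. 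When $\lambda=\mu$ one has $a_k\equiv1$ and $1/a_k=\int_{[0,1]}x^k\,d\delta_1(x)$, giving the Dirac mass at $1$. For necessity, when $\lambda<\mu$ I would invoke that any representing measure forces the Hankel matrix $[1/a_{i+j}]$ to be positive semidefinite; its leading $2\times2$ minor is
\[
\frac{1}{a_0}\cdot\frac{1}{a_2}-\Big(\frac{1}{a_1}\Big)^2=\frac{\mu(\mu+1)}{\lambda(\lambda+1)}-\frac{\mu^2}{\lambda^2}=\frac{\mu(\lambda-\mu)}{\lambda^2(\lambda+1)},
\]
which is strictly negative when $\mu>\lambda$, so no representing measure exists and $M_z$ is not subnormal.

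For part~(ii) I would use the criterion, recorded in the introduction, that $\{1/a_k\}$ is completely alternating if and only if the difference sequence $\{\Delta(1/a_k)\}$, with $\Delta(1/a_k)=1/a_{k+1}-1/a_k$, is completely monotone. A short simplification using $(\lambda)_k(\lambda+k)=(\lambda)_{k+1}=\lambda\,(\lambda+1)_k$ gives the closed form
\[
\Delta\!\Big(\frac{1}{a_k}\Big)=\frac{(\mu)_k}{(\lambda)_k}\Big(\frac{\mu+k}{\lambda+k}-1\Big)=\frac{\mu-\lambda}{\lambda}\cdot\frac{(\mu)_k}{(\lambda+1)_k}.
\]
The key point is that the surviving factor $\{(\mu)_k/(\lambda+1)_k\}$ is again a ratio of Pochhammer symbols, so part~(i) applies verbatim with $\lambda$ replaced by $\lambda+1$: it is a completely monotone (Hausdorff moment) sequence precisely when $\mu\le\lambda+1$. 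Combining this with the sign of the scalar prefactor $(\mu-\lambda)/\lambda$, which is non-negative precisely when $\mu\ge\lambda$, shows that $\{\Delta(1/a_k)\}$ is completely monotone, hence $M_z$ completely hyperexpansive, exactly when $\lambda\le\mu\le\lambda+1$. For the two failure regimes I would observe that $\mu<\lambda$ makes every term $\Delta(1/a_k)$ negative (hence not completely monotone), while $\mu>\lambda+1$ makes $\{(\mu)_k/(\lambda+1)_k\}$ strictly increasing, which a completely monotone sequence can never be.

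The main obstacle is not conceptual but lies in two book-keeping points: carrying out the telescoping simplification of $\Delta(1/a_k)$ cleanly into the single Pochhammer ratio $\frac{\mu-\lambda}{\lambda}\frac{(\mu)_k}{(\lambda+1)_k}$, and correctly isolating the necessity obstruction in part~(i). For the latter one must resist the temptation to argue by monotonicity alone: when $\mu>\lambda$ the shift is not a contraction and its representing measure, if any, would live on $[0,\mu/\lambda]$, so an increasing moment sequence is not in itself contradictory, and the genuine obstruction is the failure of Hankel positivity exhibited above. Once the closed form for the difference sequence and this minor computation are in place, part~(ii) is a direct bootstrap of part~(i), requiring no analytic input beyond the Beta integral.
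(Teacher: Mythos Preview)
Your argument is correct and, for part~(ii), essentially identical to the paper's: both compute $\Delta\bigl((\mu)_k/(\lambda)_k\bigr)=\tfrac{\mu-\lambda}{\lambda}\,(\mu)_k/(\lambda+1)_k$ and then invoke part~(i) with $\lambda$ replaced by $\lambda+1$.

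The difference lies in part~(i). The paper does not prove it at all, deferring instead to \cite{C-P-Y} for the characterisation and to \cite{C-D} for the representing measure. You supply a self-contained argument: the Beta integral for sufficiency and the $2\times2$ Hankel minor $\tfrac{\mu(\lambda-\mu)}{\lambda^2(\lambda+1)}$ for necessity. This is more elementary than the cited sources and has the virtue of working regardless of whether the shift is contractive (a point you correctly flag: when $\mu>\lambda$ the moment problem, if solvable, would be on $[0,\mu/\lambda]$, so monotonicity alone is not an obstruction, but Hankel positivity is). Your necessity argument is thus genuinely sharper in presentation than a bare citation, and it dovetails cleanly with the bootstrap in part~(ii).
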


\begin{proof}
The multiplication operator $M_z$ on $\mathcal H(K_{\lambda,\mu})$
is completely hyperexpansive if and only if the sequence $\{\frac{(\mu)_k}{(\lambda)_k}\}_{k \in \mathbb Z_+}$
is completely alternating.
% Recall that a sequence $\{a_k\}_{k \in \mathbb Z_+}$ is completely alternating
% if and only if the sequence $\{a_{k+1}-a_k\}$ is completely monotone.
Here
\begin{align*}
\Delta \left(\frac{(\mu)_k}{(\lambda)_k}\right) =\frac{(\mu)_{k+1}}{(\lambda)_{k+1}}-\frac{(\mu)_k}{(\lambda)_k}
&=\frac {(\mu)_{k+1}-(\mu)_k(\lambda+k-1)}{(\lambda)_{k+1}}\\
&=\frac{\mu-\lambda}{\lambda}\frac{(\mu)_k}{(\lambda+1)_k}.
\end{align*}
By first part of this theorem, $\{\frac{\mu-\lambda}{\lambda}\frac{(\mu)_k}{(\lambda+1)_k}\}_{k \in \mathbb Z_+}$
is a completely monotone sequence if and only if $\lambda\leq \mu\leq \lambda+1.$ This completes the proof.

\end{proof}

The following proposition gives a sufficient condition for the subnormality of multiplication operator on Hilbert space 
determined by sum of two kernels belonging to the class  $K_{\lambda, \mu}.$

\begin{proposition}\label{prop2.9}
Let $0<\mu\leq \lambda'\leq \lambda\leq \lambda'+1.$ Then the multiplication operator
$M_z$ on $\mathcal{H}(K_{\lambda,\mu}+K_{\lambda',\mu})$ is contractive subnormal.

\end{proposition}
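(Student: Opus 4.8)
The plan is to verify the Hausdorff moment criterion for the coefficient sequence of the sum kernel. Write
\[
(K_{\lambda,\mu}+K_{\lambda',\mu})(z,w)=\sum_{k\in\mathbb Z_+}c_k\,(z\bar w)^k,\qquad c_k=\frac{(\lambda)_k+(\lambda')_k}{(\mu)_k}.
\]
Since $\lambda\ge\mu$ and $\lambda'\ge\mu$, the ratios of consecutive terms of $\{(\lambda)_k/(\mu)_k\}$ and $\{(\lambda')_k/(\mu)_k\}$ are $(k+\lambda)/(k+\mu)\ge1$ and $(k+\lambda')/(k+\mu)\ge1$, so both sequences are non-decreasing, hence so is $\{c_k\}$. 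Consequently the weighted shift unitarily equivalent to $M_z$ has weights $\sqrt{c_k/c_{k+1}}\le1$, so $M_z$ on $\mathcal H(K_{\lambda,\mu}+K_{\lambda',\mu})$ is bounded and contractive. It remains to show that $\{1/c_k\}_{k\in\mathbb Z_+}$ is a Hausdorff moment (equivalently, completely monotone) sequence.

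The crux is the factorization
\[
\frac1{c_k}=\frac{(\mu)_k}{(\lambda)_k+(\lambda')_k}=\frac{(\mu)_k}{(\lambda')_k}\cdot\frac{1}{1+(\lambda)_k/(\lambda')_k},
\]
and I would show each factor is completely monotone. The first factor $\{(\mu)_k/(\lambda')_k\}$ is the reciprocal of the coefficient sequence of $K_{\lambda',\mu}$; since $\lambda'\ge\mu$, Theorem \ref{thm2.8}(i) says $M_z$ on $\mathcal H(K_{\lambda',\mu})$ is subnormal, which is exactly the statement that $\{(\mu)_k/(\lambda')_k\}$ is a Hausdorff moment sequence (one can also read off the Beta/Dirac measure displayed there). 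For the second factor, set $d_k=(\lambda)_k/(\lambda')_k$. Applying Theorem \ref{thm2.8}(ii) to the kernel $K_{\lambda',\lambda}$, i.e.\ with the pair $(\lambda',\lambda)$ in the role of $(\lambda,\mu)$, the hypothesis $\lambda'\le\lambda\le\lambda'+1$ says that $M_z$ on $\mathcal H(K_{\lambda',\lambda})$ is completely hyperexpansive, which by the criterion recalled in the Introduction means precisely that $\{d_k\}=\{(\lambda)_k/(\lambda')_k\}$ is completely alternating. The constant sequence $\{1\}$ is completely alternating and sums of completely alternating sequences are completely alternating (as noted in the proof of Corollary \ref{cor2.6}), so $\{1+d_k\}$ is completely alternating, and then Remark \ref{rem2.6} gives that $\{1/(1+d_k)\}$ is completely monotone.

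To conclude I would invoke the standard fact that the termwise product of two Hausdorff moment sequences is again one: if $a_k=\int_{[0,1]}x^k\,d\nu_1(x)$ and $b_k=\int_{[0,1]}x^k\,d\nu_2(x)$ then $a_kb_k=\int_{[0,1]}u^k\,d\nu(u)$, where $\nu$ is the law of the product of independent variables with laws $\nu_1,\nu_2$, again supported in $[0,1]$. Applied to the two factors above this shows $\{1/c_k\}_{k\in\mathbb Z_+}$ is completely monotone, so $M_z$ on $\mathcal H(K_{\lambda,\mu}+K_{\lambda',\mu})$ is contractive subnormal. I do not expect a serious obstacle here: every step is light once the factorization is in hand, and the only genuinely essential point is the recognition that the hypothesis $\lambda'\le\lambda\le\lambda'+1$ is exactly the complete‑hyperexpansivity window of Theorem \ref{thm2.8}(ii) for $K_{\lambda',\lambda}$ — in particular it is the upper bound $\lambda\le\lambda'+1$, and not merely $\lambda\ge\lambda'$, that makes $\{(\lambda)_k/(\lambda')_k\}$ completely alternating rather than just increasing. (If one preferred to avoid quoting closure of moment sequences under products, one could instead expand $1/(1+d_k)$ as the geometric series $\sum_{j\ge1}(-1)^{j-1}((\mu)_k/(\lambda')_k)\,(\,\cdots)$ and argue as in the proof of Proposition \ref{pro2.9}, but the product route is cleaner.)
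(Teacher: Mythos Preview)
Your proof is correct and follows essentially the same route as the paper: the identical factorization $\frac{(\mu)_k}{(\lambda)_k+(\lambda')_k}=\frac{(\mu)_k}{(\lambda')_k}\cdot\frac{1}{1+(\lambda)_k/(\lambda')_k}$, complete monotonicity of the first factor via Theorem~\ref{thm2.8}(i), complete alternation of $\{(\lambda)_k/(\lambda')_k\}$ via Theorem~\ref{thm2.8}(ii), Remark~\ref{rem2.6} for the reciprocal, and closure under products. Your explicit verification of contractivity and your justification of the product rule for Hausdorff moment sequences are slight elaborations beyond what the paper writes, but the argument is the same.
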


\begin{proof}
Observe that
\beqn (K_{\lambda,\mu}+K_{\lambda',\mu})(z,w)=\sum_{k \in \mathbb Z_+} \frac{(\lambda)_k+(\lambda')_k}{(\mu)_k}(z\bar{w})^k
\qquad (z,w\in \mathbb D) \eeqn
and \beqn \frac{(\mu)_k}{(\lambda)_k+(\lambda')_k}=
\frac{(\mu)_k}{(\lambda')_k}\frac{1}{1+\frac{(\lambda)_k}{(\lambda')_k}}.\eeqn
Since $\mu\leq \lambda',$ it follows from part (i) of Theorem \ref{thm2.8}
that $\{\frac{(\mu)_k}{(\lambda')_k}\}_{k \in \mathbb Z_+}$ is completely monotone.

If $\lambda'\leq \lambda\leq \lambda'+1,$ then by
part (ii) of Theorem \ref{thm2.8}, the sequence $\{\frac{(\lambda)_k}{(\lambda')_k}\}_{k \in \mathbb Z_+}$
is a completely alternating sequence. So is the sequence $\{1+\frac{(\lambda)_k}{(\lambda')_k}\}_{k \in \mathbb Z_+}.$ Hence, by Remark \ref{rem2.6},
$\left\{\frac{1}{1+\frac{(\lambda)_k}{(\lambda')_k}}\right\}_{k \in \mathbb Z_+}$ is a completely monotone sequence.
Thus, being a product of two completely monotone sequences, $\{\frac{(\mu)_k}{(\lambda)_k+(\lambda')_k}\}_{k \in \mathbb Z_+}$ is a completely monotone sequence.
This completes the proof.
\end{proof}

\begin{remark} \label{rem2.13}
Here are some remarks:
\begin{itemize}
\item[(i)]The case when $\mu<\lambda'$ and $\lambda=\lambda'+1.$ The representing measure for the
sequence $\left \{\frac{1}{1+\frac{(\lambda)_k}{(\lambda')_k}}\right\}_{k \in \mathbb Z_+}$ is $\lambda'x^{2\lambda'-1}dx.$
The representing measure for the
sequence $\{\frac{(\mu)_k}{(\lambda')_k}\}_{k \in \mathbb Z_+}$ is given in part (i) of Theorem \ref{thm2.8}. Thus, using
Remark $2.4$ of \cite{A-Ch}, one may obtain the representing measure for $M_z$ on
$\mathcal{H}(K_{\lambda,\mu}+K_{\lambda',\mu})$ to be
given by
$$d\nu(x)=\frac {\lambda'\Gamma(\lambda')}{\Gamma(\mu)\Gamma(\lambda'-\mu)}x^{2\lambda'-1}
\left(\int_0^{1-x}t^{\lambda'-\mu-1}(1-t)^{\mu-2\lambda'-1}dt\right)dx.$$
But in general, when $\lambda<\lambda'+1,$ we do not know the representing measure for
the sequence $\left \{\frac{1}{1+\frac{(\lambda)_k}{(\lambda')_k}}\right\}_{k \in \mathbb Z_+}$
as well as for the sequence $\left \{\frac{(\mu)_k}{(\lambda)_k+(\lambda')_k}\right\}_{k\in \mathbb Z_+.}$

\item[(ii)]
Consider the kernel  $(K_{1,1}+K_{3,1})(z,w)=\sum_{k \in \mathbb Z_+}\frac{k^2+3k+4}{2}(z\bar{w})^k$ for all $ z,w\in\mathbb D.$ It follows from
Proposition $\ref{nms}$ that the sequence $\left \{\frac{2}{k^2+3k+4}\right\}_{k\in\mathbb Z_{+}}$ is not completely monotone.
Consequently, the multiplication operator $M_z$ on $\mathcal H(K_{1,1}+K_{3,1})$ is not subnormal.

For $\lambda>1,$ consider
the kernel $K_{{\lambda},1}+K_{3,1}.$
We claim that there exists a $\lambda_0>1$ such that $\left\{\frac{(1)_k}{(\lambda_0)_k+(3)_k}\right\}_{k\in\mathbb Z_{+}}$
is not completely monotone. If not, assume that it is a completely monotone sequence for all $\lambda>1$.
As $\lambda$ goes to $1,$ one may get that $\left \{\frac{2}{k^2+3k+4}\right\}_{k\in\mathbb Z_{+}}$
is completely monotone, which is a contradiction. Therefore,
we conclude that there exists a $\lambda_0>1$ such that the multiplication operator
$M_z$ on $\mathcal H(K_{\lambda_0,1}+K_{3,1})$ is not subnormal. By using properties of gamma function, one may verify that $K_{\lambda_0,1}\;$
and $K_{3,1}$ both satisfy $(a), (b)$ and $(c)$ of the Conjecture \ref{conj}.
This also provides a class of counterexamples for the Conjecture \ref{conj}.

\end{itemize}
\end{remark}

%%%%%%%%%%%%%%%%%%%%%%%%%%%%%%%%%%%%%%%%%%%%%%%%%%%%%%%%%%%%%%%%%%%%%%%%%%%%%%%%%%%%%%%%%%%%%%%%%%%%%%%%%%%%%%%%%%%%%%%%%%%%%%%%%%%%%%%%%%%
%%%%%%%%%%%%%%%%%%%%%%%%%%%%%%%%%%%%%%%%%%%%%%%%%%%%%%%%%%%%%%%%%%%%%%%%%%%%%%%%%%%%%%%%%%%%%%%%%%%%%%%%%%%%%%%%%%%%%%%%%%%%%%%%%%%%%%%%%%%

\begin{proposition}\label{power of comp alt}
Let $0<p\leq q\leq p+1$. Suppose $K_1(z,w)=\sum_{k \in \mathbb Z_+} a_k^p (z\bar{w})^k$ 
and $K_2(z,w)=\sum_{k \in \mathbb Z_+} a_k^q (z\bar{w})^k$ are any two reproducing kernels
such that $\{a_k\}_{k \in \mathbb Z_+}$ is a completely 
alternating sequence. Then the multiplication operator
$M_z$ on $\mathcal{H}(K_1+K_2)$ is subnormal.
\end{proposition}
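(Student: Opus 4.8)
The plan is to reduce the statement to the completely-monotone criterion for subnormality and then exploit the multiplicative structure, exactly in the spirit of Proposition \ref{prop2.9}. By the moment-sequence criterion recalled in the introduction, $M_z$ on $\mathcal{H}(K_1+K_2)$ is subnormal precisely when $\left\{\frac{1}{a_k^p+a_k^q}\right\}_{k \in \mathbb Z_+}$ is a completely monotone sequence. Writing
\beqn \frac{1}{a_k^p+a_k^q}=\frac{1}{a_k^p}\cdot\frac{1}{1+a_k^{q-p}},\eeqn
I would establish that each of the two factors is a completely monotone sequence, and then conclude by the standard fact that the termwise product of two completely monotone sequences is again completely monotone.

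For the first factor, note that since $\{a_k\}_{k \in \mathbb Z_+}$ is completely alternating, Theorem \ref{At-thm} (equivalently Remark \ref{rem2.6}) combined with the semigroup property of completely alternating sequences under raising to powers in $(0,1]$ gives that $\{a_k^p\}_{k \in \mathbb Z_+}$ is completely alternating for $0<p\leq 1$; hence $\{a_k^{-p}\}_{k \in \mathbb Z_+}$ is completely monotone by Remark \ref{rem2.6}. (One must first check $p\le 1$ is forced here: since $0<p\le q\le p+1$ does not by itself give $p\le 1$, I instead argue directly that $t\mapsto t^{-p}$ is a completely monotone function of $t>0$ for every $p>0$, and $\{a_k\}$ completely alternating with $a_k>0$ means composing a completely monotone function with it yields a completely monotone sequence — this is the Bernstein-type composition rule underlying Theorem \ref{At-thm}(iii)–(iv) and is the cleaner route.) For the second factor, observe $q-p\in(0,1]$ by hypothesis, so $\{a_k^{q-p}\}_{k \in \mathbb Z_+}$ is completely alternating (power rule again), hence $\{1+a_k^{q-p}\}_{k \in \mathbb Z_+}$ is completely alternating, and therefore $\left\{\frac{1}{1+a_k^{q-p}}\right\}_{k \in \mathbb Z_+}$ is completely monotone by Remark \ref{rem2.6}.

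Putting the pieces together: the product $\left\{\frac{1}{a_k^p}\cdot\frac{1}{1+a_k^{q-p}}\right\}_{k \in \mathbb Z_+}=\left\{\frac{1}{a_k^p+a_k^q}\right\}_{k \in \mathbb Z_+}$ is completely monotone, so $M_z$ on $\mathcal{H}(K_1+K_2)$ is subnormal, as claimed. (Contractivity, if one wants it as in Proposition \ref{prop2.9}, follows because $a_k^p+a_k^q$ is non-decreasing in $k$ when $\{a_k\}$ is, which holds for completely alternating $\{a_k\}$ with $a_0\ge 1$; otherwise one simply normalizes.)

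\textbf{Main obstacle.} The only delicate point is justifying that raising a completely alternating sequence to a power $\alpha\in(0,1]$ preserves complete alternation (equivalently, that $t\mapsto t^\alpha$ is a Bernstein function and complete alternation is stable under post-composition with Bernstein functions), and dually that $t\mapsto t^{-p}$ is completely monotone for $p>0$ so that post-composition with it sends completely alternating sequences to completely monotone ones. These are classical facts from the Berg–Christensen–Ressel theory cited in the paper, but they need to be invoked with the correct hypotheses ($0<q-p\le 1$ is exactly what makes the exponent admissible); everything else is bookkeeping with the product rule for completely monotone sequences.
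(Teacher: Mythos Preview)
Your proposal is correct and follows essentially the same route as the paper: factor $\frac{1}{a_k^p+a_k^q}=\frac{1}{a_k^p}\cdot\frac{1}{1+a_k^{q-p}}$, show each factor is completely monotone, and multiply. The paper handles your ``main obstacle'' by citing \cite[Corollary 1]{At-Ra} for the fact that $\{a_k^{q-p}\}$ is completely alternating when $0\le q-p\le 1$, and \cite[Corollary 4.1]{B-C-E} for the fact that $\{a_k^{-p}\}$ is completely monotone for all $p>0$ --- precisely the two Bernstein-type composition rules you flagged (and your pivot away from the false start ``$p\le 1$'' to the direct use of $t\mapsto t^{-p}$ being completely monotone is exactly right).
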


\begin{proof}
Note that $$\frac{1}{a_k^p+a_k^q}=\frac{1}{a_k^p(1+a_k^{q-p})}.$$
Since $0\leq q-p\leq 1$ and $\{a_k\}_{k \in \mathbb Z_+}$ is completely alternating, it follows from \cite[Corollary 1]{At-Ra}
 that $\{a_k^{q-p}\}_{k \in \mathbb Z_+}$ is also completely alternating. 
 Thus so is $\{1+a_k^{q-p}\}_{k \in \mathbb Z_+}.$ Hence, by Remark \ref{rem2.6}, 
 $\{(1+a_k^{q-p})^{-1}\}_{k \in \mathbb Z_+}$ is completely monotone. 
By \cite[Corollary 4.1]{B-C-E}, $\{a_k^{-p}\}_{k \in \mathbb Z_+}$ is completely monotone.
  Now the proof follows as the product of two completely monotone sequences 
  is also completely monotone.    
\end{proof}

\begin{example}\label {ex2.13}
For any $p>0,$ let $K_p(z,w)$ be the positive definite kernel given by 
$$K_p(z,w):=\sum_{k \in \mathbb Z_+}(k+1)^p (z\bar{w})^k \qquad (z, w \in \mathbb D).$$
Then it is known that the multiplication operator $M_z$ on $\mathcal H(K_p)$ is subnormal
 with the representing measure $d\nu(x)=
\frac{(-\log x)^{p-1}}{\Gamma(p)} dx$ (cf. \cite[Theorem 4.3]{C-E}). By Proposition \ref{power of comp alt}, 
it follows that $M_z$ on $\mathcal H(K_p+K_q)$ is subnormal if $p\leq q\leq p+1.$

\end{example}

The next result also provides a class of counter-examples 
to the Conjecture \ref{conj}.
\begin{theorem} \label{thm2.16}
Consider the positive definite kernel $K_p$ given in Example \ref{ex2.13}. 
Then the multiplication operator $M_z$ on $\mathcal{H}(K_p+K_{p+2})$ is subnormal if and only if $p\geq 1.$
\end{theorem}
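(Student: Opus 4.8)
The plan is to reduce the whole statement to an elementary positivity question about one explicit function. Write $a_k=(k+1)^{p}+(k+1)^{p+2}=(k+1)^{p}\bigl(1+(k+1)^{2}\bigr)$, so that by the moment criterion recalled in the introduction, $M_z$ on $\mathcal H(K_p+K_{p+2})$ is subnormal exactly when $\{a_k^{-1}\}_{k\in\mathbb Z_+}$ is a Hausdorff moment sequence. The first thing I would do is manufacture a concrete (signed) integrand: from $(k+1)^{-p}=\frac1{\Gamma(p)}\int_0^\infty e^{-(k+1)s}s^{p-1}\,ds$ and $\bigl(1+(k+1)^{2}\bigr)^{-1}=\int_0^\infty e^{-(k+1)t}\sin t\,dt$, multiply the two, apply Fubini (legitimate since the double integral of the modulus is at most $\Gamma(p)(k+1)^{-p-1}$), put $u=s+t$, and finally substitute $x=e^{-u}$; this yields
\[
\frac1{a_k}=\frac1{\Gamma(p)}\int_0^1 x^{k}\,\phi(-\log x)\,dx,\qquad \phi(u):=\int_0^u (u-t)^{p-1}\sin t\,dt .
\]
Because the Hausdorff moment problem on $[0,1]$ is determinate \cite{B-C-R}, $\{a_k^{-1}\}$ is a Hausdorff moment sequence if and only if this (necessarily unique) representing measure is positive, i.e.\ if and only if $\phi(u)\ge 0$ for every $u>0$ (note $\phi$ is continuous). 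So everything comes down to showing that $\phi\ge 0$ on $(0,\infty)$ precisely when $p\ge 1$.

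For the sufficiency, suppose $p\ge 1$. If $p=1$ then $\phi(u)=1-\cos u\ge 0$ directly. If $p>1$, one integration by parts gives
\[
\phi(u)=u^{p-1}-(p-1)\int_0^u (u-t)^{p-2}\cos t\,dt\ \ge\ u^{p-1}-(p-1)\int_0^u (u-t)^{p-2}\,dt=0,
\]
the inequality using $\cos t\le 1$ together with $p-2>-1$, and the final equality being $(p-1)\int_0^u(u-t)^{p-2}\,dt=u^{p-1}$. (Conceptually, $\phi=\Gamma(p)\,I^{p}[\sin]=\Gamma(p)\,I^{p-1}[1-\cos]\ge 0$ by the semigroup law for the Riemann--Liouville fractional integral $I^{\alpha}$ and positivity of its kernel.) Hence $\{a_k^{-1}\}$ is a Hausdorff moment sequence, so $M_z$ on $\mathcal H(K_p+K_{p+2})$ is subnormal, and in fact contractive because $a_k$ is increasing in $k$.

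For the necessity, suppose $0<p<1$; it suffices to exhibit one $u$ with $\phi(u)<0$, and $u=2\pi$ works. The substitution $t\mapsto 2\pi-t$ gives $\phi(2\pi)=-\int_0^{2\pi}v^{p-1}\sin v\,dv$, and splitting $[0,2\pi]$ at $\pi$ and applying $v\mapsto v+\pi$ on the right half gives
\[
\phi(2\pi)=-\int_0^{\pi}\bigl(v^{p-1}-(v+\pi)^{p-1}\bigr)\sin v\,dv<0,
\]
since $v\mapsto v^{p-1}$ is strictly decreasing for $p<1$ while $\sin v>0$ on $(0,\pi)$. Thus $\phi$ is negative on a nonempty open interval. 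If $\{a_k^{-1}\}$ were nonetheless a Hausdorff moment sequence, determinacy would force its representing measure to equal $\Gamma(p)^{-1}\phi(-\log x)\,dx$, which is not positive --- a contradiction. Hence $M_z$ is not subnormal. (Incidentally, this shows that for $0<p<1$ the kernels $K_p$ and $K_{p+2}$ give yet another class of counterexamples to the conjecture, since both clearly satisfy conditions (a)--(c).)

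The part I expect to carry the weight of the argument is the reduction in the first paragraph: getting the representation $a_k^{-1}=\Gamma(p)^{-1}\int_0^1 x^k\phi(-\log x)\,dx$ exactly right, and --- more importantly --- invoking determinacy of the Hausdorff moment problem, so that a single sign change of $\phi$ is enough to destroy subnormality. Once that is in place, both directions are one-line computations with $\phi$.
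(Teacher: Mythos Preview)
Your reduction is exactly the one in the paper: the authors likewise write $a_k^{-1}=\int_0^1 x^k\,g(x)\,dx$ with $g(x)=\Gamma(p)^{-1}\phi(-\log x)$ and your $\phi$, observe that subnormality is equivalent to $\phi\ge 0$ on $(0,\infty)$, and then study the sign of $\phi$. The difference lies in how the sign question is settled. The paper rewrites $\phi(u)$ as $\sqrt{u}\,s_{p-\frac12,\frac12}(u)$ via a table identity, where $s_{\mu,\nu}$ is the Lommel function of the first kind, and then imports two theorems of Steinig on the sign of Lommel functions to conclude both directions. Your argument bypasses this detour entirely: for $p\ge 1$ a single integration by parts (or the fractional-integral reading $\phi=\Gamma(p)\,I^{p-1}[1-\cos]$) gives $\phi\ge 0$ directly, and for $0<p<1$ the explicit evaluation $\phi(2\pi)=-\int_0^\pi\bigl(v^{p-1}-(v+\pi)^{p-1}\bigr)\sin v\,dv<0$ finishes the job. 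This is more elementary and self-contained than the paper's route; the price is that you do not identify $\phi$ with a named special function, which the paper's approach would allow one to push further (e.g.\ to other exponent gaps). One small point worth making explicit in your write-up: the determinacy step uses that $\phi(-\log x)\,dx$ has finite total variation on $[0,1]$ (so Stone--Weierstrass applies to identify it with any hypothetical positive representing measure); this follows from $|\phi(u)|\le u^{p}/p$.
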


\begin{proof}
%Let $s=1.$ Then $\{\frac{1}{(k+1)((k+1)^2+1)}\}_{k \in \mathbb Z_+}$ is completely monotone 
%follows from
%$$2\int_0^1 x^k\sin^2 \left({\frac{-\log x}{2}}\right) dx=\frac{1}{(k+1)((k+1)^2+1)}.$$
For $x\in (0,1],$
let $g(x):=\frac{1}{\Gamma(p)}\int_0^{-\log x}(-\log x-y)^{p-1}\sin y ~dy$ 
and $d\nu(x)=g(x)dx.$ Then 
\beqn
\int_0^1 x^k d\nu(x) &=& \frac{1}{\Gamma(p)}\int_{y=0}^\infty\int_{x=0}^{e^{-y}}x^k(-\log x-y)^{p-1} dx\sin y ~dy
\cr &=& \frac{1}{\Gamma(p)}\int_{y=0}^\infty e^{-(k+1)y}\int_{u=0}^\infty
e^{-(k+1)u}u^{p-1} du \sin y ~dy
\cr &=&\frac{1}{\Gamma(p)}\int_{y=0}^\infty e^{-(k+1)y} 
\frac{\Gamma(p)}{(k+1)^p} \sin y ~dy
\cr &=&\frac{1}{(k+1)^p}\frac{1}{(k+1)^2+1}.
\eeqn
Thus the sequence $\{\frac{1}{(k+1)^p}\frac{1}{(k+1)^2+1}\}_{k \in \mathbb Z_+}$ is completely 
monotone if and only if the function $g(x)$ is non-negative a.e. Note that the function $g(x)$ 
is non-negative on $(0,1]$ a.e. if and only if the function $h(x):=g(e^{-x})$ is non-negative a.e. on $(0,\infty).$
Now 
\beqn 
h(x)=\frac{1}{\Gamma(p)} \int_0^{x}(x-y)^{p-1}\sin y ~dy =\frac{x^p}{\Gamma(p)}\int_0^1(1-y)^{p-1}sin(xy)~dy.
\eeqn
By \cite[Chapter 3, pp 439]{G-R}, we have 
$ h(x)= \frac{\sqrt x}{\Gamma(p)}s_{p-\tfrac{1}{2},\tfrac{1}{2}}(x),$ where 
$s_{p-\tfrac{1}{2},\tfrac{1}{2}}(x)$ is the Lommel's function of first kind. Thus, 
the sequence $\{\frac{1}{(k+1)^p}\frac{1}{(k+1)^2+1}\}_{k \in \mathbb Z_+}$ being completely monotone 
is equivalent to the non-negativity of the function $s_{p-\tfrac{1}{2},\tfrac{1}{2}}(x).$ 
If $p \geq 1$ then by \cite[Theorem A]{St}, we get that $s_{p-\tfrac{1}{2},\tfrac{1}{2}}(x) \geq 0$ for all $x > 0.$
The converse follows from  \cite[Theorem 2]{St}, which completes the proof.

\end{proof}

%%%%%%%%%%%%%%%%%%%%%%%%%%%%%%%%%%%%%%%%%%%%%%%%%%%%%%%%%%%%%%%%%%%%%%%%%%%%%%%%%%%%%%%%%%%%%%%%%%%%%%%%%%%%%%%%%%%5
%%%%%%%%%%%%%%%%%%%%%%%%%%%%%%%%%%%%%%%%%%%%%%%%%%%%%%%%%%%%%%%%%%%%%%%%%%%%%%%%%%%%%%%%%%%%%%%%%%%%%%%%%%%%%%%%%%%%%%%%%%%%%

\section{Multi-variable case}

Let $\mathbb Z_+^d$ denote the cartesian product ${\mathbb Z_+ \times \cdots \times \mathbb Z_+}~(d
~\text{times}).$ Let $\alpha =(\alpha_1, \cdots, \alpha_d) \in \mathbb Z_+^d,$
we write $|\alpha| :=\alpha_1 + \cdots + \alpha_d$ and $\alpha!=\alpha_1! \cdots \alpha_d!. $

If $T = (T_1, \cdots, T_d)$
is a $d$-tuple of commuting bounded linear operators $T_j~(1 \leq j
\leq d)$ on $\mathcal H$ then we set $T^*$ to be $(T^*_1,
\cdots, T^*_d)$ and $T^{\alpha}$ to be $T^{\alpha_1}_1\cdots T^{\alpha_d}_d.$

Given a commuting $d$-tuple $T$ of bounded
linear operators $T_1, \cdots, T_d$ on ${\mathcal H},$  set
\beqn Q_T(X) \mathrel{\mathop:}= \sum_{i=1}^dT^*_iXT_i \qquad (X \in B(\mathcal H)).\eeqn
For $X \in B(\mathcal H)$ and $ k \geq 1$ , one may define $Q^k_T(X):= Q_T(Q^{k-1}_T(X)),$ where $Q^0_T(X)=X.$

Recall that $T$ is said to be 
\begin{itemize}
 \item [(i)] {\it spherical contraction} if $ Q_T(I) \leq I.$
 \item [(ii)] {\it jointly left invertible} if there exists a positive
   number $c$ such that $ Q_T(I) \geq cI.$
\end{itemize}

For a jointly left invertible $T,$ the spherical Cauchy dual $T^{\mathfrak s}$ of $T$ is the $d$-tuple
$(T^{\mathfrak{s}}_1,T^{\mathfrak{s}}_2 \cdots, T^{\mathfrak{s}}_d),$ where
$T^{\mathfrak{s}}_i:=T_i (Q_T(I))^{-1}~(i=1, 2, \cdots, d). $
We say that $T$ is a {\it joint complete hyperexpansion} if
\beqn B_n(T) \mathrel{\mathop:}= \sum_{k =0}^n (-1)^{k} {n
\choose k} Q^{k}_T(I)\leq 0 \qquad (n\geq 1).\eeqn

Throughout this section $\mathbb B$ denotes the open unit ball
$\{z\in \mathbb C^d: |z_1|^2+\cdots + |z_d|^2<1\}$ and $\partial \mathbb B$ denotes the unit sphere
$\{z \in \mathbb C^d:|z_1|^2+\cdots + |z_d|^2=1 \}$ in $\mathbb C^d.$

Let $\{\beta_{\alpha}\}_{\alpha \in \mathbb Z^d_+}$ be a multi-sequence of
positive numbers.
Consider the Hilbert space $H^2(\beta)$ of formal power series
$f(z)=\sum_{\alpha \in  \mathbb Z^d_+}\hat{f}(\alpha) z^{\alpha} $ such that
$$ \|f\|^2_{H^2(\beta)}=\sum_{\alpha \in  \mathbb Z^d_+}|\hat{f}(\alpha)|^2 \beta^2_{\alpha} <
\infty.$$

The Hilbert space $H^2(\beta)$ is said to be {\it spherically balanced} if the norm on $H^2(\beta)$
admits the slice representation $[\nu, H^2(\gamma)]$, that is, there exist a {\it Reinhardt measure} $\nu$
 and a Hilbert
space $H^2(\gamma)$ of formal power series in one variable such that
\beqn \|{f}\|^2_{H^2(\beta)} = \int_{\partial \mathbb
 B}\|{f_z}\|^2_{H^2(\gamma)}d\nu(z) \qquad (f \in
 H^2(\beta)), \eeqn where $\gamma =\{\gamma_k\}_{k \in \mathbb Z_+}$ is given by the relation
$\beta_{\alpha}=\gamma_{|\alpha|}\|z^{\alpha}\|_{L^2(\partial \mathbb B, \nu)}$ for all $\alpha \in \mathbb Z_+^d.$
Here, by the Reinhardt measure, we mean a $\mathbb T^d$-invariant
finite positive Borel measure supported in $\partial \mathbb B,$
where $\mathbb T^d$ denotes the the unit $d$-torus
$\{z \in \mathbb C^d:|z_1|=1,\cdots,|z_d|=1\}.$
For more details on spherically balanced Hilbert spaces, we refer to \cite{Ch-K}.

The following lemma has been already recorded in \cite[Lemma 4.3]{Ch-K}. We include a statement 
for ready reference.

\begin{lemma}\label{lem3.1}
Let $H^2(\beta)$ be a spherically balanced Hilbert space and let $[\nu, H^2(\gamma)]$
be the slice representation for the norm on $H^2(\beta).$
Consider the $d$-tuple $M_z=(M_{z_1}, \cdots, M_{z_d})$ of multiplication by the
co-ordinate functions $z_1, \cdots, z_d$ on $H^2(\beta).$ 
Then for every $n \in \mathbb Z_+$ and $\alpha \in \mathbb Z^d_+,$ \beqn
\inp{B_n(M_z)z^{\alpha}}{z^{\alpha}}=
\sum_{k=0}^n (-1)^k {n \choose k} \inp{Q^k_{M_z}(I)z^{\alpha}}{z^{\alpha}} =
\sum_{k=0}^n (-1)^k {n \choose k} \gamma^2_{k+|\alpha|}
\|z^{\alpha}\|^2_{L^2(\partial \mathbb B, \nu)}. \eeqn 
\end{lemma}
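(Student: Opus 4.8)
The plan is to unravel the definition of $B_n(M_z)$ on monomials by first computing the action of $Q_{M_z}^k(I)$ on $z^\alpha$ and then inserting this into the alternating sum, with the slice representation doing the bookkeeping for the norms. First I would record how each $M_{z_i}^*$ acts: since $\{z^\alpha/\beta_\alpha\}$ is an orthonormal basis of $H^2(\beta)$, one has $M_{z_i} z^\alpha = z^{\alpha+\epsilon_i}$ and hence $M_{z_i}^* z^\alpha = \frac{\beta_\alpha^2}{\beta_{\alpha-\epsilon_i}^2} z^{\alpha-\epsilon_i}$ (interpreted as $0$ when $\alpha_i=0$). From this, $M_{z_i}^* M_{z_i} z^\alpha = \frac{\beta_{\alpha+\epsilon_i}^2}{\beta_\alpha^2} z^\alpha$, so $Q_{M_z}(I) z^\alpha = \bigl(\sum_{i=1}^d \frac{\beta_{\alpha+\epsilon_i}^2}{\beta_\alpha^2}\bigr) z^\alpha$; every monomial is an eigenvector of each $Q^k_{M_z}(I)$, which is the structural fact that makes the computation tractable.

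Next I would iterate: I claim that $\langle Q^k_{M_z}(I) z^\alpha, z^\alpha\rangle = \gamma^2_{k+|\alpha|}\,\|z^\alpha\|^2_{L^2(\partial\mathbb B,\nu)}$ for all $k\ge 0$ and all $\alpha$, and prove it by induction on $k$. The base case $k=0$ is exactly the defining relation $\beta_\alpha = \gamma_{|\alpha|}\|z^\alpha\|_{L^2(\partial\mathbb B,\nu)}$, since $\langle I z^\alpha, z^\alpha\rangle = \beta_\alpha^2$. For the inductive step, I would use that $Q_{M_z}$ raises the homogeneity degree by one together with the Reinhardt ($\mathbb T^d$-invariance) property of $\nu$, which forces distinct monomials of the same degree to be orthogonal in $L^2(\partial\mathbb B,\nu)$ and gives the clean identity $\sum_{i=1}^d \|z^{\alpha+\epsilon_i}\|^2_{L^2(\nu)} = \|z^\alpha\|^2_{L^2(\nu)}$ coming from $\sum_i |z_i|^2 = 1$ on $\partial\mathbb B$. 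Concretely, writing $Q_{M_z}^{k}(I)z^\alpha = c_{k,\alpha} z^\alpha$, one gets the recursion $c_{k+1,\alpha} = \sum_i \frac{\beta_{\alpha+\epsilon_i}^2}{\beta_\alpha^2} c_{k,\alpha+\epsilon_i}$; feeding in the inductive hypothesis $c_{k,\alpha+\epsilon_i} = \gamma^2_{k+|\alpha|+1}\|z^{\alpha+\epsilon_i}\|^2_{L^2(\nu)}/\|z^{\alpha+\epsilon_i}\|^2_{L^2(\nu)}$... more carefully, since $c_{k,\beta}=\gamma^2_{k+|\beta|}\|z^\beta\|^2$ and $\beta_{\alpha+\epsilon_i}^2 = \gamma^2_{|\alpha|+1}\|z^{\alpha+\epsilon_i}\|^2$, the sum telescopes via $\sum_i \|z^{\alpha+\epsilon_i}\|^2 = \|z^\alpha\|^2$ and $\beta_\alpha^2 = \gamma^2_{|\alpha|}\|z^\alpha\|^2$ to give $c_{k+1,\alpha} = \gamma^2_{k+1+|\alpha|}\|z^\alpha\|^2/\gamma^2_{|\alpha|}\cdot\gamma^2_{|\alpha|}$... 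I would simply verify the arithmetic collapses to $\gamma^2_{k+1+|\alpha|}\|z^\alpha\|^2_{L^2(\nu)}$, which closes the induction.

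Finally, summing against $(-1)^k\binom{n}{k}$ over $k=0,\dots,n$ and using that the maps $Q^k_{M_z}(I)$ all commute (they are simultaneously diagonalized by the monomial basis) gives
\[
\langle B_n(M_z) z^\alpha, z^\alpha\rangle = \sum_{k=0}^n (-1)^k\binom{n}{k}\langle Q^k_{M_z}(I) z^\alpha, z^\alpha\rangle = \sum_{k=0}^n(-1)^k\binom{n}{k}\gamma^2_{k+|\alpha|}\|z^\alpha\|^2_{L^2(\partial\mathbb B,\nu)},
\]
which is the asserted identity. The main obstacle is the inductive step: getting the telescoping identity $\sum_{i=1}^d \|z^{\alpha+\epsilon_i}\|^2_{L^2(\partial\mathbb B,\nu)} = \|z^\alpha\|^2_{L^2(\partial\mathbb B,\nu)}$ rigorously, which relies on the Reinhardt (torus-invariance) hypothesis to kill cross terms and on $\sum_i|z_i|^2\equiv 1$ on $\partial\mathbb B$; everything else is bookkeeping with the relation $\beta_\alpha = \gamma_{|\alpha|}\|z^\alpha\|_{L^2(\partial\mathbb B,\nu)}$. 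Since the statement is quoted from \cite[Lemma 4.3]{Ch-K}, I would present this as a brief sketch rather than a full derivation.
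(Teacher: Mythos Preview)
The paper does not supply its own proof of this lemma; it simply quotes the statement from \cite[Lemma~4.3]{Ch-K} for reference. Your sketch is a correct direct verification along the lines of the original argument: diagonalize $Q_{M_z}^k(I)$ on the monomial basis, prove $\langle Q_{M_z}^k(I)z^\alpha,z^\alpha\rangle=\gamma^2_{k+|\alpha|}\|z^\alpha\|^2_{L^2(\partial\mathbb B,\nu)}$ by induction on $k$ using $\beta_\alpha=\gamma_{|\alpha|}\|z^\alpha\|_{L^2(\partial\mathbb B,\nu)}$ and the identity $\sum_{i=1}^d\|z^{\alpha+\epsilon_i}\|^2_{L^2(\partial\mathbb B,\nu)}=\|z^\alpha\|^2_{L^2(\partial\mathbb B,\nu)}$, and then sum.

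One small correction: the telescoping identity $\sum_i\|z^{\alpha+\epsilon_i}\|^2_{L^2(\nu)}=\|z^\alpha\|^2_{L^2(\nu)}$ does not require the Reinhardt hypothesis at all---it is a pointwise consequence of $\sum_i|z_i|^2=1$ on $\partial\mathbb B$, since $\sum_i|z^{\alpha+\epsilon_i}|^2=|z^\alpha|^2\sum_i|z_i|^2=|z^\alpha|^2$, and there are no cross terms to kill. The $\mathbb T^d$-invariance of $\nu$ is what ensures the monomials are orthogonal in $L^2(\partial\mathbb B,\nu)$, which underlies the slice representation itself, but it plays no additional role in the inductive step you describe. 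Also, your recursion derivation got a bit tangled in the write-up; the clean version is $c_{k,\alpha}=\gamma^2_{k+|\alpha|}/\gamma^2_{|\alpha|}$, and then $c_{k+1,\alpha}=\sum_i c_{k,\alpha+\epsilon_i}\beta_{\alpha+\epsilon_i}^2/\beta_\alpha^2$ collapses directly to $\gamma^2_{k+1+|\alpha|}/\gamma^2_{|\alpha|}$ once you insert $\beta_\alpha^2=\gamma^2_{|\alpha|}\|z^\alpha\|^2_{L^2(\nu)}$ and the telescoping sum.
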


If the interior of the point spectrum $\sigma_p(M^*_z)$ of $M_z^*$ is non-empty then $H^2(\beta)$
may be realized as a reproducing kernel Hilbert space $\mathcal H(K)$
\cite[Propositions 19 and 20]{J-L}, where the reproducing kernel $K$ is given by
\beqn K(z,w)=\sum_{\alpha \in  \mathbb Z^d_+}\frac{ z^{\alpha} \bar{w}^{\alpha}}{\beta^2_{\alpha}}
\qquad (z, w \in \sigma_p(M^*_z)).\eeqn
This has lead to the following definition.
\begin{definition}
 Let $\mathcal H(K)$ be a reproducing kernel Hilbert space
defined on the open unit ball $\mathbb B$
with reproducing kernel
$K(z,w)=\sum_{\alpha \in  \mathbb Z^d_+}a_{\alpha} z^{\alpha} \bar{w}^{\alpha}$
for all $z, w \in \mathbb B.$
We say that $K$ is {\it balanced kernel} if
$\mathcal H(K)$ is a spherically balanced Hilbert space.
Further, the multiplication $d$-tuple $M_z$ on $\mathcal H(K)$
may be called as {\it balanced multiplication tuple}.

\end{definition}

\begin{remark} \label{rem1.2}
The spherical Cauchy dual  $M_z^{\mathfrak s}$ of a jointly
left invertible balanced multiplication tuple $M_z$ can be seen as a multiplication
$d$-tuple $M_z^{\mathfrak s}=(M_{z_1}^{\mathfrak s},
\cdots, M_{z_d}^{\mathfrak s})$ of multiplication by the co-ordinate functions
$z_1, \cdots, z_d$ on  $H^2(\beta^{\mathfrak s}),$ where
\beqn \beta_{\alpha}^{\mathfrak s}=\frac{1}{\gamma_{|\alpha|}}\|z^{\alpha}\|
_{L^2(\partial \mathbb B, \nu)} \qquad (\alpha \in \mathbb Z_+^d).\eeqn
In other words, the norm on $H^2(\beta^{\mathfrak s})$
admits the slice representation $[\nu, H^2(\gamma^{\prime})],$ where
$\gamma^{\prime}_k=1/\gamma_k$ for all $k \in \mathbb Z_+. $
\end{remark}

\begin{proposition}\label{prop1.3}
If $ K_1(z,w)=\sum_{\alpha \in  \mathbb Z^d_+}a_{\alpha} z^{\alpha} \bar{w}^{\alpha}$ and
$ K_2(z,w)=\sum_{\alpha \in  \mathbb Z^d_+}b_{\alpha} z^{\alpha} \bar{w}^{\alpha}$ are any two balanced kernels
 with the slice representations $[\nu, H^2(\gamma_1)]$
and $[\nu, H^2(\gamma_2)]$ respectively.
Then $K_1+K_2$ is a balanced kernel with the slice representation $[\nu/2, H^2(\gamma)],$ where $\gamma=\{\gamma_k\}$
is given by the relation \beqn \gamma_k=\frac{\sqrt{2}\gamma_{k,1}\gamma_{k,2}}{(\gamma^2_{k,1}+\gamma^2_{k,2})^{1/2}} \qquad (k \in \mathbb Z_+).\eeqn
\end{proposition}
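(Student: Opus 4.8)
The plan is to unwind the definition of ``spherically balanced'' for the kernel $K_1+K_2$ and identify the associated Reinhardt measure and one-variable weight sequence explicitly. First I would recall that since $K_1$ and $K_2$ are balanced with the \emph{same} Reinhardt measure $\nu$, by definition there are one-variable weight sequences $\gamma_1=\{\gamma_{k,1}\}$ and $\gamma_2=\{\gamma_{k,2}\}$ such that the coefficients satisfy $a_\alpha^{-1}=\beta_{\alpha,1}^2=\gamma_{|\alpha|,1}^2\,\|z^\alpha\|_{L^2(\partial\mathbb B,\nu)}^2$ and likewise $b_\alpha^{-1}=\gamma_{|\alpha|,2}^2\,\|z^\alpha\|_{L^2(\partial\mathbb B,\nu)}^2$. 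The Hilbert space $\mathcal H(K_1+K_2)=H^2(\beta)$ has coefficients $\beta_\alpha^2=(a_\alpha+b_\alpha)^{-1}$. The key algebraic step is then the identity
\beqn
\frac{1}{a_\alpha+b_\alpha}=\frac{1}{\displaystyle \frac{1}{\gamma_{|\alpha|,1}^2\,c_\alpha}+\frac{1}{\gamma_{|\alpha|,2}^2\,c_\alpha}}
=c_\alpha\,\frac{\gamma_{|\alpha|,1}^2\gamma_{|\alpha|,2}^2}{\gamma_{|\alpha|,1}^2+\gamma_{|\alpha|,2}^2},
\eeqn
where I write $c_\alpha:=\|z^\alpha\|_{L^2(\partial\mathbb B,\nu)}^2$ for brevity. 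This shows $\beta_\alpha^2$ factors as a quantity depending only on $|\alpha|$ times the moment $c_\alpha$ of $\nu$.

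Next I would reconcile this with the candidate slice representation $[\nu/2,H^2(\gamma)]$. Since $\nu\mapsto\nu/2$ scales every moment by $\tfrac12$, we have $\|z^\alpha\|_{L^2(\partial\mathbb B,\nu/2)}^2=\tfrac12 c_\alpha$. The defining relation $\beta_\alpha=\gamma_{|\alpha|}\,\|z^\alpha\|_{L^2(\partial\mathbb B,\nu/2)}$ therefore demands
\beqn
\gamma_k^2=\frac{\beta_\alpha^2}{\tfrac12 c_\alpha}=\frac{2\gamma_{k,1}^2\gamma_{k,2}^2}{\gamma_{k,1}^2+\gamma_{k,2}^2}\qquad(|\alpha|=k),
\eeqn
which is exactly $\gamma_k=\sqrt{2}\,\gamma_{k,1}\gamma_{k,2}\big/(\gamma_{k,1}^2+\gamma_{k,2}^2)^{1/2}$ as claimed; note the right-hand side depends only on $k=|\alpha|$, so the assignment is consistent. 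It remains to verify that $[\nu/2,H^2(\gamma)]$ genuinely \emph{is} a slice representation of the norm on $H^2(\beta)$, i.e. that $\|f\|_{H^2(\beta)}^2=\int_{\partial\mathbb B}\|f_z\|_{H^2(\gamma)}^2\,d(\nu/2)(z)$ for all $f$. Expanding $f=\sum_\alpha\hat f(\alpha)z^\alpha$ and using that distinct monomials of the same total degree are orthogonal in $L^2(\partial\mathbb B,\nu)$ (because $\nu$ is $\mathbb T^d$-invariant), the integral on the right collapses termwise to $\sum_\alpha|\hat f(\alpha)|^2\gamma_{|\alpha|}^2\cdot\tfrac12 c_\alpha=\sum_\alpha|\hat f(\alpha)|^2\beta_\alpha^2$, as needed. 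This last orthogonality/bookkeeping verification is essentially the content already packaged in \cite{Ch-K}, so I would either cite it or spell out the one-line computation.

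The main obstacle, such as it is, is not any single hard estimate but rather being careful about the normalization of the Reinhardt measure: the naive guess $[\nu,H^2(\gamma)]$ fails because the harmonic-mean identity above produces an extra factor, and one must track where the factor of $2$ goes — it is absorbed by halving $\nu$, which simultaneously rescales the moments $c_\alpha$ and keeps the $\mathbb T^d$-invariance (hence the Reinhardt property) intact. A secondary point worth a sentence is that $\nu/2$ is still a finite positive Borel measure supported on $\partial\mathbb B$, so it is a legitimate Reinhardt measure, and $H^2(\gamma)$ with the stated $\gamma$ is a bona fide one-variable weighted sequence space since $\gamma_k>0$ for all $k$. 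Once these normalization checks are in place, the proof is complete.
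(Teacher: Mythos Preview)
Your proposal is correct and follows essentially the same route as the paper: compute $(a_\alpha+b_\alpha)^{-1}$ from the two slice representations to obtain the harmonic-mean factor $\gamma_{|\alpha|,1}^2\gamma_{|\alpha|,2}^2/(\gamma_{|\alpha|,1}^2+\gamma_{|\alpha|,2}^2)$ times $\|z^\alpha\|_{L^2(\partial\mathbb B,\nu)}^2$, absorb the factor $2$ by passing to $\nu/2$, and conclude via the orthogonality of the monomials in $L^2(\partial\mathbb B,\nu/2)$. Your write-up is simply more explicit about the normalization bookkeeping and the final orthogonality step than the paper's, which dispatches the latter in a single sentence.
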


\begin{proof}
For every $\alpha \in \mathbb Z_+^d,$ we have
\beqn a_{\alpha}+b_{\alpha} = \frac{1} {\gamma^2_{|\alpha|, 1}\|z^{\alpha}\|^2_{L^2(\partial \mathbb B, \nu)}}+
\frac{1}{\gamma^2_{|\alpha|, 2}\|z^{\alpha}\|_{L^2(\partial \mathbb B, \nu)}}
= \frac{(\gamma^2_{|\alpha|, 1}+\gamma^2_{|\alpha|, 2})}{\gamma^2_{|\alpha|, 1}\gamma^2_{|\alpha|, 2}
\|z^{\alpha}\|^2_{L^2(\partial \mathbb B, \nu)}}. \eeqn
Therefore
% the norm of the monomials  $\{z^{\alpha}\}_{\alpha \in \mathbb Z_+^d}$  in $\mathcal H(K_1+K_2)$ is
\beqn \|z^{\alpha}\|^2_{\mathcal H(K_1+K_2)} = \frac{2\gamma^2_{|\alpha|, 1}\gamma^2_{|\alpha|, 2}}
{(\gamma^2_{|\alpha|, 1}+\gamma^2_{|\alpha|, 2})}\|z^{\alpha}\|^2_{L^2(\partial \mathbb B, \nu/2)}
=\gamma^2_{|\alpha|}\|z^{\alpha}\|^2_{L^2(\partial \mathbb B, \nu/2)} \eeqn
for all $\alpha \in \mathbb Z_+^d.$
Since $\{z^{\alpha}\}_{\alpha \in \mathbb Z_+^d}$ forms an orthogonal subset of $L^2(\partial \mathbb B, \nu/2),$
the conclusion follows immediately.
\end{proof}

\begin{remark} The conclusion of the Proposition \ref{prop1.3} still holds even if we chose two different Reinhardt measures $\nu_1$ and $\nu_2$
in the slice representations of $K_1$ and $K_2,$  such that for some sequence of positive real numbers  $\{h_k\}_{k \in \mathbb Z_+},$ 
$\|z^{\alpha}\|_{L^2(\partial \mathbb B, \nu_1)}=h_{|\alpha|}\|z^{\alpha}\|_{L^2(\partial \mathbb B, \nu_2)}$ for all $\alpha \in \mathbb Z_+^d.$  
For every $j=1,2,$ it is easy to verify that \beqn \sum_{i=1}^d
\frac{\|z^{\alpha+\varepsilon_i}\|^2_{L^2(\partial \mathbb B,
\nu_j)}}{\|z^{\alpha}\|^2_{L^2(\partial \mathbb B, \nu_j)}}=1. \eeqn
This implies that  $\{h_k\}_{k \in \mathbb Z_+}$ is a constant
sequence, say $c$. Now, a routine argument, using the Stone-Weierstrass
theorem, we conclude that  $\mu_1=c^2\mu_2.$

\end{remark}

Let $\mathcal K_{\nu}$ denote the class of all balanced kernels with the
following properties:
\begin{itemize}
\item[(i)]For all $K \in \mathcal K_{\nu},$ the norm on $\mathcal H(K)$ admits the slice representations
 with fixed Reinhardt measure $\nu.$
 \item[(ii)]For every member $K$ of $\mathcal K_{\nu},$ the multiplication operator $M_z$ defined on $\mathcal H(K)$ is jointly left invertible.
\item[(iii)] The Cauchy dual tuple $M_z^{\mathfrak s}$ of is a joint complete hyperexpansion.

\end{itemize}

\begin{lemma}\label{lem1.5}
For every member $K$ of $\mathcal K_{\nu},$ the multiplication operator tuple $M_z$ defined on
$\mathcal H(K)$ is a subnormal spherical contraction.
 \end{lemma}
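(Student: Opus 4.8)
The plan is to reduce the subnormality of the spherical contraction $M_z$ on $\mathcal H(K)$, for $K \in \mathcal K_\nu$, to a one-variable statement about the slice data $\gamma = \{\gamma_k\}_{k \in \mathbb Z_+}$, and then invoke the one-variable dictionary already recalled in the excerpt (in particular Theorem \ref{At-thm} and the criterion that $M_z$ on $\mathcal H(\sum a_k(z\bar w)^k)$ is contractive subnormal iff $\{1/a_k\}$ is a Hausdorff moment sequence). First I would unwind the hypotheses: since the Cauchy dual tuple $M_z^{\mathfrak s}$ is a joint complete hyperexpansion and, by Remark \ref{rem1.2}, $M_z^{\mathfrak s}$ is the balanced multiplication tuple on $H^2(\beta^{\mathfrak s})$ with slice representation $[\nu, H^2(\gamma')]$ where $\gamma'_k = 1/\gamma_k$, I would apply Lemma \ref{lem3.1} to $M_z^{\mathfrak s}$. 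The inequality $B_n(M_z^{\mathfrak s}) \le 0$ for all $n \ge 1$, tested on the orthogonal vectors $z^\alpha$, gives
\beqn \sum_{k=0}^n (-1)^k \binom{n}{k} \gamma'^{\,2}_{k+|\alpha|}\, \|z^\alpha\|^2_{L^2(\partial \mathbb B, \nu)} \le 0 \qquad (n \ge 1,\ \alpha \in \mathbb Z_+^d), \eeqn
and since the norms $\|z^\alpha\|^2_{L^2(\partial \mathbb B,\nu)}$ are positive and $|\alpha|$ ranges over all of $\mathbb Z_+$, this says precisely that $\{\gamma'^{\,2}_k\}_{k\in\mathbb Z_+} = \{1/\gamma_k^2\}_{k\in\mathbb Z_+}$ is a completely alternating sequence.

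Next I would observe that jointly-left-invertibility of $M_z$ on $\mathcal H(K)$ (property (ii) of $\mathcal K_\nu$) forces the one-variable shift with weights built from $\gamma$ to be left invertible, so the sequence $\{\gamma_k^2\}$ is comparable to a bounded-ratio sequence; in any case, by Remark \ref{rem2.6} (with $a_k = 1/\gamma_k^2$, a completely alternating sequence), $\{\gamma_k^2\}_{k\in\mathbb Z_+}$ is a completely monotone sequence, i.e. a Hausdorff moment sequence: there is a positive measure $\rho$ on $[0,1]$ with $\gamma_k^2 = \int_{[0,1]} s^k\, d\rho(s)$. The content of this reduction is that the one-variable "building block" Hilbert space $H^2(\gamma)$ with $\|e_k\|^2 = \gamma_k^2$ carries a contractive subnormal shift — equivalently, $M_z$ on $\mathcal H(K_\gamma)$ with $K_\gamma(z,w) = \sum 1/\gamma_k^2 \,(z\bar w)^k$ is contractive subnormal — wait, I must be careful about which way the reciprocal goes; I would instead phrase it as: $\gamma_k^2$ is a Hausdorff moment sequence, hence the weighted shift with weight sequence $\{\sqrt{\gamma_{k+1}^2/\gamma_k^2}\}$ — correction, with weights $\{\sqrt{\gamma_k^2/\gamma_{k+1}^2}\}$ — is subnormal by the Berger–Gellar–Wallen characterization.

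Finally I would assemble the multivariable conclusion from the slice representation $[\nu, H^2(\gamma)]$ of the norm on $\mathcal H(K)$. The standard fact about spherically balanced spaces (from \cite{Ch-K}) is that $M_z$ on $H^2(\beta)$ is a subnormal spherical contraction whenever the one-variable slice shift on $H^2(\gamma)$ is subnormal (with representing measure supported in $[0,1]$, which ensures the spherical contraction property $Q_{M_z}(I) \le I$) and $\nu$ is a Reinhardt measure: one disintegrates the normal extension over the sphere using $\nu$, taking on each slice the minimal normal extension of the subnormal slice shift, and checks commutativity and the subnormality relations monomial-by-monomial using the slice identity $\beta_\alpha = \gamma_{|\alpha|}\|z^\alpha\|_{L^2(\partial\mathbb B,\nu)}$. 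I expect the main obstacle to be exactly this last gluing step — verifying that the fiberwise normal extensions assemble into a genuine commuting normal $d$-tuple and that the spherical contraction bound survives — but since this is recorded in \cite{Ch-K} for spherically balanced spaces, I would cite it and confine the new work to the derivation (via Lemma \ref{lem3.1} and Remark \ref{rem2.6}) that $\{\gamma_k^2\}$ is a Hausdorff moment sequence, which is where properties (ii) and (iii) of $\mathcal K_\nu$ enter.
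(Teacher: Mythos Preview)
Your proposal is correct and matches the paper's argument: Remark \ref{rem1.2} together with Lemma \ref{lem3.1} applied to $M_z^{\mathfrak s}$ give that $\{1/\gamma_k^2\}$ is completely alternating, Remark \ref{rem2.6} then yields $\{\gamma_k^2\}$ completely monotone, and subnormality follows. The only cosmetic difference is in the last step --- the paper simply reapplies Lemma \ref{lem3.1} to obtain $B_n(M_z) \ge 0$ for all $n$ (hence a subnormal spherical contraction) rather than your fiberwise-gluing formulation via \cite{Ch-K}; your detours about left-invertibility and the direction of the reciprocals are unnecessary, since property (ii) of $\mathcal K_\nu$ serves only to guarantee that the Cauchy dual is defined.
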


\begin{proof}

Let $K \in \mathcal K_{\nu} $ and $[\nu, H^2(\gamma)]$ be the slice representation for the norm on $\mathcal H(K).$
Note that the Cauchy dual $M_z^{\mathfrak s}$ of $M_z$ is a balanced multiplication tuple with slice representation
$[\nu, H^2(1/\gamma)]$ (see Remark \ref{rem1.2}).
Since $M_z^{\mathfrak s}$ is a joint complete hyperexpansion.
It follows from Lemma \ref{lem3.1} that $\{1/\gamma^2_k\}_{k \in \mathbb Z_+}$ is a
completely alternating sequence. Therefore, by Remark \ref{rem2.6},  $\{\gamma^2_k\}_{k \in \mathbb Z_+}$ is
completely monotone sequence. Now again by applying Lemma \ref{lem3.1}, we conclude that
the multiplication operator $M_z$ is a subnormal spherical contraction.
\end{proof}

\begin{theorem}\label{thm1.4}
If $K_1$ and $K_2$ are any two members of $\mathcal K_{\nu}$ then 
 the multiplication operator $M_z$ on $\mathcal H(K_1+K_2)$ is a subnormal spherical contraction.
\end{theorem}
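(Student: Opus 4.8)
The plan is to reduce the assertion to the one-variable sequence machinery via the slice representations, exactly as in the proof of Lemma~\ref{lem1.5}. Write $[\nu,H^2(\gamma_1)]$ and $[\nu,H^2(\gamma_2)]$ for the slice representations of the norms on $\mathcal H(K_1)$ and $\mathcal H(K_2)$, with slice sequences $\{\gamma_{k,1}\}$ and $\{\gamma_{k,2}\}$. By Proposition~\ref{prop1.3}, $K_1+K_2$ is again a balanced kernel whose norm admits the slice representation $[\nu/2,H^2(\gamma)]$, where
\[
\frac{1}{\gamma_k^2}=\frac12\Big(\frac{1}{\gamma_{k,1}^2}+\frac{1}{\gamma_{k,2}^2}\Big)\qquad(k\in\mathbb Z_+).
\]
Thus the tuple $M_z$ on $\mathcal H(K_1+K_2)$ is governed entirely by the single slice sequence $\{\gamma_k^2\}_{k\in\mathbb Z_+}$, and Lemma~\ref{lem3.1} applies verbatim (it is insensitive to the replacement of $\nu$ by $\nu/2$).

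Next I would extract the relevant property of $\{\gamma_k^2\}$. Since $K_j\in\mathcal K_\nu$, condition~(iii) defining $\mathcal K_\nu$ says the spherical Cauchy dual $M_z^{\mathfrak s}$ on $\mathcal H(K_j)$ --- which by Remark~\ref{rem1.2} is balanced with slice sequence $\{1/\gamma_{k,j}\}$ --- is a joint complete hyperexpansion. Plugging this into Lemma~\ref{lem3.1} (as in the proof of Lemma~\ref{lem1.5}) shows that $\{1/\gamma_{k,1}^2\}_k$ and $\{1/\gamma_{k,2}^2\}_k$ are completely alternating. A sum of completely alternating sequences is completely alternating (the observation already used in the proof of Corollary~\ref{cor2.6}), so $\{1/\gamma_k^2\}_k$ is completely alternating, and hence by Remark~\ref{rem2.6} its reciprocal $\{\gamma_k^2\}_k$ is a completely monotone sequence. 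Running the last step of the proof of Lemma~\ref{lem1.5} for $M_z$ on $\mathcal H(K_1+K_2)$ then finishes the argument: complete monotonicity forces $\sum_{k=0}^n(-1)^k\binom nk\gamma_{k+|\alpha|}^2\ge 0$ for all $n$ and $\alpha$, so by Lemma~\ref{lem3.1} $\langle B_n(M_z)z^\alpha,z^\alpha\rangle\ge 0$ for all $n\ge 0$ and all $\alpha\in\mathbb Z_+^d$, whence $B_n(M_z)\ge 0$ since the $z^\alpha$ form an orthogonal basis diagonalising each $Q^k_{M_z}(I)$; this yields that $M_z$ is a subnormal spherical contraction.

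I do not expect a genuine obstacle here; the only points needing care are the bookkeeping between Proposition~\ref{prop1.3} and Lemma~\ref{lem3.1} --- in particular that passing to $K_1+K_2$ rescales the Reinhardt measure to $\nu/2$ while leaving the slice-sequence criterion untouched --- and making sure the sum-of-completely-alternating fact is invoked for the \emph{reciprocal} sequences $\{1/\gamma_{k,j}^2\}$ rather than for $\{\gamma_{k,j}^2\}$. As an alternative packaging one could instead check directly that $K_1+K_2\in\mathcal K_{\nu/2}$ --- this additionally requires verifying joint left invertibility, which follows from the short estimate $\gamma_{k+1}^2/\gamma_k^2\ge\min(c_1,c_2)$ in terms of the left-invertibility constants $c_1,c_2$ of $K_1,K_2$ --- and then apply Lemma~\ref{lem1.5} verbatim.
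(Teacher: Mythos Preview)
Your proof is correct and follows essentially the same route as the paper's: use Proposition~\ref{prop1.3} to obtain the slice representation $[\nu/2,H^2(\gamma)]$ with $1/\gamma_k^2=\tfrac12(1/\gamma_{k,1}^2+1/\gamma_{k,2}^2)$, invoke (as in Lemma~\ref{lem1.5}) the complete-alternating property of each $\{1/\gamma_{k,j}^2\}$, pass to the sum, and then conclude via Remark~\ref{rem2.6} and Lemma~\ref{lem3.1}. Your write-up is in fact a bit more explicit than the paper's, which simply says ``the conclusion follows by imitating the argument given in Lemma~\ref{lem1.5}.''
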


\begin{proof}
Note that the norm on $\mathcal H(K_1+K_2)$ admits the slice representation
$[\nu/2, H^2(\gamma)],$ where
$ \gamma^2_k=2\frac{\gamma^2_{k,1}\gamma^2_{k,2}}{\gamma^2_{k,1}+\gamma^2_{k,2}}$ for all $k \in \mathbb Z_+$
(see Proposition \ref{prop1.3}). It follows from the proof of Lemma \ref{lem1.5} that  $\{1/\gamma^2_{k,1}\}_{k \in \mathbb Z_+}$  and
$\{1/\gamma^2_{k,2}\}_{k \in \mathbb Z_+}$ are completely alternating. So their sum, that is,
$\{1/\gamma^2_k\}_{k \in \mathbb Z_+}$ is a completely alternating sequence.
Now the conclusion follows by imitating the argument given in
Lemma \ref{lem1.5}.
\end{proof}

For $\lambda >0,$ consider the positive definite kernel
$K_{\lambda}$ given by \beqn
K_{\lambda}(z,w)=\frac{1}{(1-\inp{z}{w})^{\lambda}} \qquad (z, w \in
\mathbb B). \eeqn The norm on $H(K_{\lambda})$ admits the slice
representation $[\sigma, H^2(\gamma)],$ where $\sigma$ denotes the
normalized surface area measure on $\partial \mathbb B$ and
$\gamma^2_k= \frac{(d)_k}{(\lambda)_k}$ for all $k \in \mathbb Z_+.$
It is well known that the multiplication operator $M_{z, \lambda}$
on $H(K_{\lambda})$ is a subnormal contraction if and only if
$\lambda \geq d.$ The same can also be verified by using Lemma \ref{lem3.1}  
and part $(i)$ of Theorem \ref{thm2.8}. Similarly, by
using Lemma \ref{lem3.1} and part $(ii)$ of Theorem
\ref{thm2.8}, one may conclude that the Cauchy dual tuple $M_{z,
\lambda}^{\mathfrak s}$ is a joint complete hyperexpansion if and
only if $d \leq \lambda \leq d+1.$
Thus, if we choose $\lambda$ and $\lambda^{\prime}$ are such that $d \leq \lambda,
\lambda^{\prime} \leq d+1.$ Then $K_{\lambda}$ and
$K_{\lambda^{\prime}} \in \mathcal K_{\sigma.}$ It now follows from
Theorem \ref{thm1.4} that the multiplication operator $M_z$ on
$\mathcal H (K_{\lambda}+K_{\lambda^{\prime}})$ is subnormal. This
is also included in the following example.

\begin{example}
Let $0<d \leq \lambda'\leq \lambda\leq \lambda'+1.$ Note that the
norm on $\mathcal H (K_{\lambda}+K_{\lambda^{\prime}})$ admits the
slice representation $[\sigma /2, H^2(\gamma)],$ where $\gamma^2_k=
\frac{2(d)_k}{(\lambda)_k+(\lambda^{\prime})_k}$ for all $k \in
\mathbb Z_+.$ From the proof of Proposition \ref{prop2.9}, it is
clear that $\{\gamma^2_k\}_{k \in \mathbb Z_+}$ is completely
monotone. Hence, the multiplication operator $M_z$ on $\mathcal H
(K_{\lambda}+K_{\lambda^{\prime}})$ is subnormal.
\end{example}

A $d$-tuple $S=(S_1, \cdots, S_d)$ of commuting bounded linear
operators $S_1, \cdots, S_d $ in ${\mathcal B}({\mathcal H})$ is a {\it spherical
isometry} if $S^*_1S_1 + \cdots + S^*_dS_d=I.$ In other words, $ Q_S(I) = I.$ 
The most interesting example of a spherical isometry is the Szeg\"o
$d$-shift; that is, the $d$-tuple $M_z$ of multiplication operators
$M_{z_1}, \cdots, M_{z_d}$ on the Hardy space $H^2(\partial \mathbb
B)$ of the unit ball.
% Let $\mathbb C[z]=\mathbb C[z_1, \cdots, z_m]$ denote the
% complex vector space of polynomials in the variables
% $z_1,\cdots,z_m.$
% For a finite positive Borel measure $\nu$ supported on the unit sphere $\partial
% \mathbb B,$ let $P^2(\partial \mathbb B,\nu)$ be the closure of
% ${\mathbb C[z]}|_{\partial \mathbb B}$ in ${L^2(\partial \mathbb
% B,\nu)}.$ 
% 

% Recall that, every spherical isometry weighted multi-shifts is unitarily equivalent to 

Let $\nu $ be a Reinhardt measure. 
Consider the multiplication $d$-tuple $M_z$ on a reproducing kernel Hilbert space
 $\mathcal H(K^{\nu})$ determined by the reproducing kernel 
 \beq \label{eq3.2} K^{\nu}(z,w)=\sum_{\alpha \in \mathbb Z_+^d} \frac{z^{\alpha}
\bar{w}^{\alpha}}{\|z^{\alpha}\|^2_{L^2(\partial \mathbb B, \nu)}}\qquad (z, w \in \mathbb B). \eeq
Note that $M_z$ is a spherical isometry.
%  (cf. \cite[Theorem 2.1]{Cu-Sa}).
In this case, the norm on $\mathcal H(K^{\nu})$ admits
the slice representation $[\nu,H^2(\mathbb D)],$
where $H^2(\mathbb D)$ is the Hardy space of the unit disc.
% \begin{theorem} \cite[Theorem 2.1]{Cu-Sa}
% Every cyclic spherical isometry $m$-tuple is unitarily equivalent to the multiplication
% $m$-tuple $M_z$ on $P^2(\partial \mathbb B, \nu)$ for some finite
% positive Borel measure $\nu$ supported on the unit sphere $\partial
% \mathbb B.$
% \end{theorem
\begin{theorem}
Let $K^{\nu}$ be the reproducing kernel given as in equation \eqref{eq3.2} and 
$\tilde{K}$ be any balanced kernel with the slice representation $[\nu, H^2(\tilde{\gamma)}].$
Assume that the multiplication operator $M_z$ 
on $\mathcal{H}(K^{\nu}+\tilde{K})$ is subnormal. Then the multiplication operator on 
$\mathcal H(\tilde{K})$ is subnormal.
\end{theorem}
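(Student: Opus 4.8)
The plan is to mimic the one–variable argument of Proposition \ref{pro2.9}, which handled exactly the case $d=1$ with $\nu$ the point mass giving the Szeg\"o kernel. Write $\tilde K(z,w)=\sum_{\alpha}a_{\alpha}z^{\alpha}\bar w^{\alpha}$ with $a_{\alpha}=1/(\tilde\gamma^2_{|\alpha|}\|z^{\alpha}\|^2_{L^2(\partial\mathbb B,\nu)})$, and note that $K^{\nu}$ corresponds to the constant slice sequence $1$, i.e. $K^{\nu}$ has slice representation $[\nu,H^2(\gamma^{\nu})]$ with $(\gamma^{\nu}_k)^2=1$ for all $k$. By Proposition \ref{prop1.3}, $K^{\nu}+\tilde K$ is balanced with slice representation $[\nu/2,H^2(\gamma)]$ where $\gamma^2_k=2\tilde\gamma^2_k/(1+\tilde\gamma^2_k)$. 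First I would invoke Lemma \ref{lem1.5}'s proof mechanism in reverse: subnormality of $M_z$ on $\mathcal H(K^{\nu}+\tilde K)$ is, via Lemma \ref{lem3.1} applied with $\alpha=0$, equivalent to $\{\gamma^2_k\}_{k\in\mathbb Z_+}$ being completely monotone (subnormality of a balanced multiplication tuple reduces to complete monotonicity of the slice weights $\gamma^2_k$, since $\|z^{0}\|^2_{L^2}$-scaling of $B_n$ gives back the one-variable Hausdorff moment criterion on $\gamma^2$).

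Granting that reduction, the core is purely a statement about scalar sequences: if $\{2\tilde\gamma^2_k/(1+\tilde\gamma^2_k)\}_k$ is completely monotone, then $\{\tilde\gamma^2_k\}_k$ is completely monotone, whence (again by the slice criterion) $M_z$ on $\mathcal H(\tilde K)$ is subnormal. Set $c_k:=\gamma^2_k/2=\tilde\gamma^2_k/(1+\tilde\gamma^2_k)\in(0,1)$, assumed completely monotone. Then $1-c_k=1/(1+\tilde\gamma^2_k)$ is completely monotone as well (it is bounded by $1$ and $\{1\}-\{c_k\}$; more carefully, $\{1-c_k\}$ completely alternating is automatic, but one needs complete \emph{monotonicity} — here use that $1-c_k = 1/(1+\tilde\gamma^2_k) \le 1$ and that the representing measure of $c_k$ has total mass $\le 1$, so the defect sequence $1-c_k$ is a genuine Hausdorff moment sequence). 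Hence $1+\tilde\gamma^2_k = 1/(1-c_k)$, and I would expand
\beqn
\tilde\gamma^2_k = \frac{c_k}{1-c_k} = \sum_{j=1}^{\infty} c_k^{\,j},
\eeqn
exactly as in Proposition \ref{pro2.9}. Each $\{c_k^{\,j}\}_k$ is completely monotone (a product of completely monotone sequences), the partial sums increase to $\tilde\gamma^2_k$ pointwise, and a pointwise limit of completely monotone sequences is completely monotone; therefore $\{\tilde\gamma^2_k\}_k$ is completely monotone.

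Finally I would close the loop: by Lemma \ref{lem3.1} (with $\alpha$ arbitrary, the factor $\|z^{\alpha}\|^2_{L^2(\partial\mathbb B,\nu)}$ is positive and pulls out), complete monotonicity of $\{\tilde\gamma^2_k\}_k$ forces $B_n(M_z)\le 0$ to fail in the right direction — precisely, it gives that all the relevant compressions are positive, i.e. $M_z$ on $\mathcal H(\tilde K)$ is a subnormal spherical contraction (this is the ``again by applying Lemma \ref{lem3.1}'' step in the proof of Lemma \ref{lem1.5}). The main obstacle I anticipate is not the algebraic expansion but the justification that complete monotonicity of $c_k$ forces complete monotonicity of the defect $1-c_k$: one must use that $c_0\le 1$ (equivalently that $\gamma^2_0=\|z^0\|^2_{\mathcal H(K^\nu+\tilde K)}\|z^0\|^{-2}_{L^2(\partial\mathbb B,\nu/2)}$ has the right normalization, which follows since $M_z$ on $\mathcal H(K^\nu+\tilde K)$ is a spherical contraction as $K^\nu$ contributes a spherical isometry), so that the representing measure of $\{c_k\}$ lives on $[0,1]$ with mass at most $1$ and the complementary mass builds a legitimate representing measure for $\{1-c_k\}$; once this normalization is in hand the rest is the verbatim argument of Proposition \ref{pro2.9}.
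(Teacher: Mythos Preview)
Your approach is essentially identical to the paper's: reduce to the slice sequence via Proposition \ref{prop1.3} and Lemma \ref{lem3.1}, then run the geometric-series argument of Proposition \ref{pro2.9} with $c_k=\tilde\gamma_k^2/(1+\tilde\gamma_k^2)=(1+1/\tilde\gamma_k^2)^{-1}$ to conclude that $\{\tilde\gamma_k^2\}$ is completely monotone. The obstacle you anticipate is illusory---complete monotonicity of $\{1-c_k\}$ is never needed, since the expansion $\tilde\gamma_k^2=\sum_{j\ge 1}c_k^{\,j}$ only requires $c_k<1$, which holds automatically because $c_k=\tilde\gamma_k^2/(1+\tilde\gamma_k^2)$.
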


\begin{proof}
Observe that the norm on $\mathcal{H}(K^{\nu}+\tilde{K})$ admits the slice representation $[\nu/2, H^2(\gamma)],$
where ${\gamma^2_k}= 2 (1+1/{\tilde{\gamma}^2_k})^{-1}$ for all $k \in \mathbb Z_+.$
Since $M_z$ 
on $\mathcal{H}(K^{\nu}+\tilde{K})$ is subnormal, it follows from Lemma \ref{lem3.1} that $\{\gamma^2_k\}_{k \in \mathbb Z_+}$
is a completely monotone sequence. Hence, $\{(1+1/{\tilde{\gamma}^2_k)^{-1}}\}_{k \in \mathbb Z_+}$ is a completely monotone sequence.
If we replace $a_k$ by $1/{\tilde{\gamma}^2_k}$ in the proof of the Proposition \ref{pro2.9}, we get that $\{\tilde{\gamma}^2_k\}_{k \in \mathbb Z_+}$
is completely monotone. Now, by applying Lemma \ref{lem3.1}, we conclude that the multiplication operator on 
$\mathcal H(\tilde{K})$ is subnormal.
 
\end{proof}

We conclude the paper with the following questions:
\begin{question}
In view of Proposition \ref{prop2.9} and Theorem \ref{thm2.16}, it is natural to ask that
 \begin{itemize}
  \item [(i)]what is the necessary and sufficient condition for the multiplication operator $M_z$ on $\mathcal{H}(K_{\lambda,\mu}+K_{\lambda^{\prime},\mu})$ to be subnormal?
  \item [(ii)]what is the necessary and sufficient condition for the multiplication operator $M_z$ on $\mathcal{H}(K_p+K_{q})$ to be subnormal?
  \end{itemize}
\end{question}

\begin{question}
Let $K^{\nu}$ be the reproducing kernel given as in equation \eqref{eq3.2} and  $\tilde{K}$
be any positive definite kernel given by
\beqn \tilde{K}(z,w):=\sum_{\alpha \in  \mathbb Z^d_+}a_{\alpha} z^{\alpha} \bar{w}^{\alpha} \qquad (z, w \in \mathbb B). \eeqn 
Assume that the $d$-tuple $M_z=(M_{z_1}, \cdots, M_{z_d})$ of multiplication by the
co-ordinate functions $z_1, \cdots, z_d$ on $\mathcal{H}(K^{\nu}+\tilde{K})$ is subnormal.
Is it necessary that the multiplication operator on $\mathcal H(\tilde{K})$ subnormal?
 
\end{question}

\medskip \textit{Acknowledgments}.
% We express our sincere thanks to Professor Gadadhar Misra and
% Professor Sameer Chavan for several fruitful suggestions, which
% resulted in a considerable refinement of the original draft.
We express our sincere thanks to Prof. G. Misra for many fruitful conversations and
suggestions in the preparation of this paper. We would also like to thank Prof. S. Chavan
for his many useful comments and careful reading of the manuscript.

\end{document}